\documentclass[12pt]{amsart}
\usepackage{amsmath}
\usepackage{amsthm}
\usepackage{amssymb}
\usepackage{hyperref}
\usepackage{color}
\usepackage{fouridx}
\usepackage[all]{xy}
\usepackage{mathrsfs}
\usepackage[usenames]{xcolor}
\usepackage{enumitem}

\setlength{\textwidth}{6.5in} \setlength{\textheight}{8.5 in}
\setlength{\oddsidemargin}{0.0 in}
\setlength{\evensidemargin}{\oddsidemargin}
\hfuzz2pt 
\vfuzz1.5pt


\newcommand{\gothic}{\mathfrak}

\newcommand{\Q}{{\mathbb{Q}}}
\newcommand{\m}{{\gothic{m}}}

\newcommand{\pd}{\operatorname{pd}}
\newcommand{\Spec}{\operatorname{Spec}}
\newcommand{\Supp}{\operatorname{Supp}}

\newcommand{\Ext}{\operatorname{Ext}}
\newcommand{\rk}{\operatorname{rank}}

\newcommand{\CM}{\operatorname{CM}}

\newcommand{\Cl}{\operatorname{Cl}}
\newcommand{\Hom}{\operatorname{Hom}}

\renewcommand{\hat}{\widehat}
\renewcommand{\bar}{\overline}
\renewcommand{\phi}{\varphi}
\renewcommand{\tilde}{\widetilde}

\DeclareMathOperator{\CH}{H}
\DeclareMathOperator{\Proj}{Proj}

\newcommand{\ul}[1]{\underline{#1}}

\newcommand{\upe}[1]{{}^e{#1}}
\newcommand{\sdim}{\operatorname{sdim}}
\newcommand{\dep}{\operatorname{depth}}
\newcommand{\spec}{\operatorname{Spec}}
\newcommand{\hgt}{\operatorname{ht}}
\newcommand{\chom}{\operatorname{Hom}}
\newcommand{\add}{\operatorname{add}}
\newcommand{\modcat}{\operatorname{mod}}
\newcommand{\calp}{\mathcal{P}}
\newcommand{\FF}{\mathcal{FT}}
\newcommand{\cend}{\operatorname{End}}
\newcommand{\st}{\textasteriskcentered}
\DeclareMathOperator{\bproj}{\mathbf{Proj}}
\DeclareMathOperator{\bspec}{\mathbf{Spec}}
\newcommand{\scr}[1]{\mathscr{#1}}
\newcommand{\refl}{\operatorname{Ref}}
\definecolor{orange}{rgb}{1,0.5,0}


\theoremstyle{plain}
\newtheorem{thm}{Theorem}
\newtheorem{cor}[thm]{Corollary}
\newtheorem{prop}[thm]{Proposition}
\newtheorem{lemma}[thm]{Lemma}

\newtheorem{eg}[thm]{Example}

\theoremstyle{definition}
\newtheorem{defn}[thm]{Definition}
\newtheorem{notn}[thm]{Notation}

\theoremstyle{remark}
\newtheorem{rmk}[thm]{Remark}
\newtheorem{question}[thm]{Question}
\newtheorem{discuss}[thm]{Discussion}


\begin{document}
\title[Finite $F$-type and $F$-abundant modules]
{Finite F-type and F-abundant modules}
\author{Hailong Dao}
\address{Department of Mathematics\\
University of Kansas\\
 Lawrence, KS 66045-7523 USA}
\email{hdao@ku.edu}

\author{Tony Se}
\address{Department of Mathematics\\
University of Kansas\\
 Lawrence, KS 66045-7523 USA}
\email{tonyse@ku.edu}

\dedicatory{Dedicated to Professor Kei-ichi Watanabe on the occasion of his seventieth birthday}

\date{\today}
\thanks{The first author is partially supported by NSF grant 1104017}
\keywords{Frobenius endomorphism, divisor class group, $F$-regularity}

\subjclass{Primary: 13A35; Secondary:13D07, 13H10.}
\bibliographystyle{amsplain}

\numberwithin{thm}{section}. 
\numberwithin{equation}{section}

\begin{abstract}
In this note we introduce and study basic properties of two types of modules over a commutative noetherian ring $R$ of positive prime characteristic. The first is the category of modules of finite $F$-type.  These objects include reflexive ideals representing torsion elements in the divisor class group of $R$. The second class is what we call $F$-abundant modules. These include, for example, the ring $R$ itself and the canonical module when $R$ has positive splitting dimension. We prove various facts about these two categories and how they are related, for example that $\Hom_R(M,N)$ is maximal Cohen-Macaulay when $M$ is of finite $F$-type and $N$ is $F$-abundant, plus some extra (but necessary) conditions. Our methods allow us to extend previous results by Patakfalvi-Schwede, Yao and Watanabe. They also afford a deeper understanding of these objects, including complete classifications in many cases of interest, such as complete intersections and invariant subrings. 
\end{abstract}

\maketitle

\section{Introduction}
\label{intro}

Let $(R,\mathfrak{m},k)$ be a
reduced  $F$-finite local
ring of dimension $d$ and prime characteristic $p>0$.
Let $\alpha(R) = \log_p [k:k^p]$. All modules are in
$\modcat(R)$, the category of finitely
generated $R$-modules.

In this paper we prove some connections between two types of objects defined using the Frobenius endomorphism $R \to R$ with $r \mapsto r^p$.

These objects generalize some well-studied concepts. Let us start with the definitions. Fix an $R$-module $M$. 
For $e \in \mathbb{Z}_{\geqslant 0}$, let $\upe{M}$ denote the
abelian group $M$ viewed as an $R$-module
via the $e$th iteration of the Frobenius map.
We let $F^e \colon \modcat(R) \to \modcat(R)$ denote the
$e$th Peskine-Szpiro functor given by
\[
  F^e(M):= M\otimes_R \upe{R}.
\]
Given $S \subseteq \modcat(R)$,
we use $\add_R(S)$ to
denote the additive subcategory
of $\modcat(R)$ generated by $S$.

\begin{defn} \label{defn:intro}
\begin{enumerate}[label=(\arabic*),align=left,leftmargin=*,nosep]
\item Let $M$ be an $R$-module
such that $\Supp(M) =\Spec R$
and is locally free in
codimension 1.
We let $M(e)
= F^e_R(M)^{**}$, the reflexive hull of $F^e_R(M)$, viewed
as an $R$-module by
identifying $\upe{R}$ with $R$.
We say that $M$ is of
finite $F$-type if
$\{M(e)\}_{e \geqslant 0}
\subseteq \add_R(X)$ for some
$R$-module $X$ (see Lemma~\ref{lem:addx}).
We let $\FF(R)$ denote the
category of $R$-modules
of finite $F$-type.

\item Let $N,L$ be $R$-modules.
Let $b_e$ be maximum such
that $\upe{N} = L^{\oplus b_e}
\oplus N_e$ for some $N_e$. We say
that $(N,L)$ is an
abundant pair
if $\liminf_{e \to \infty}
p^{e\alpha(R)}/b_e = 0$.

\item Let $L$ be an $R$-module.
We say that $L$ is an $F$-abundant
module if $(N,L)$ is an
abundant pair for some $N$.
\end{enumerate}
\end{defn}

Examples of modules of finite $F$-type include torsion elements of the divisor class group of a normal domain $R$ (without any assumption about the order of the element), finite integral extensions that are \'etale in codimension one (see section~\ref{sec:ft}),  or $F$-periodic vector bundles on the punctured spectrum of $R$ (and of the corresponding projective variety $X$ when $R$ is a local cone of some embedding of $X$; see section~\ref{sec:vecbun}). For $F$-abundant modules, $R$ has positive splitting dimension if and only if $(R,R)$ is an abundant pair (see section~\ref{sec:abun}). A good source of examples in both cases are the rings of invariants of a finite group, see Example~\ref{HNex}.

Our main technical results (collected in section \ref{mainSec})  say roughly that under various extra conditions, if $M$ is of finite $F$-type and $N$ is $F$-abundant, then $\Hom_R(M,N)$ is maximal Cohen-Macaulay. We shall give plenty of examples to show that the technical conditions are necessary. Our approach yields a strengthening of some well-known results as well as many new ones: 

\begin{enumerate}[label=(\arabic*),align=left,leftmargin=*]
\item An extension of results by Patakfalvi-Schwede (\cite[Theorem 3.1]{PS})
and Watanabe (\cite[Corollary 2.9]{Wa}) on depth of divisor classes associated to
$F$-regular singularities. See Theorem \ref{PS}. 

\item Under certain conditions, a strong generalization
of Yao's result (\cite[Lemma 2.2]{Yao}) on Cohen-Macaulayness
of $F$-contributors. See Theorems~\ref{thm:mainweak} and
\ref{thm:mainstrong}.

\item
A complete classification of the categories of finite $F$-type $R$-modules
and abundant $F$-modules in many cases of interest, such as complete intersections and invariant subrings (Corollary \ref{cor:inva}, Theorem  \ref{thm:cift} and \ref{sec:abun}).
\end{enumerate}

\section{Notation and preliminary results}

\begin{defn} (\cite[Definition 2.4]{AE} and
\cite[Definition 4.5]{BST})
Let $a_e = a_e(R)$ be maximum such that $\upe{R} =
R^{\oplus a_e} \oplus R_e$ for some $R_e$.
The largest integer $k$ such that
\[
  \lim_{e \to \infty}
  \frac{a_e}{p^{e(k + \alpha(R))}}
  > 0
\]
is called the $F$-splitting
dimension of $R$, and
is denoted by $\sdim R$.
\end{defn}

\begin{notn}
\begin{enumerate}[label=(\alph*),align=left,leftmargin=*]
\item We use $(S_k)$ to denote
Serre's criteria for $k
\geqslant 0$.
\item We use $\CH^i_{\m}(M)$ to
denote the $i$th local cohomology
module of an $R$-module $M$
supported on $\m$.
\item For an $R$-module $M$,
we let $M^*$ denote
$\chom_R(M,R)$.
\item Suppose that $R$ has a
canonical module $\omega_R$.
Then for an $R$-module $M$,
we let $M^{\vee}$ denote
$\chom_R(M,\omega_R)$.
\item For $S \subseteq R$, we
let $\upe{S}$ to denote
$S$ viewed as a subset
of $\upe{R}$. Then for
$P \in \spec R$,
$\fourIdx{e}{}{}{\,{}^eP}{R}$ is
an $R_P$-module, and
$\fourIdx{e}{}{}{\,{}^eP}{R} =
\upe{(R_P)}$.
\item For an ideal $I \subseteq R$,
an integer $e \geqslant 0$ and $q=p^e$, we
let $I^{[q]} = \{x^q \mid x \in I\}$,
the $q$th Frobenius power of $I$.
\item We use $\CM(R)$ to denote
the subcategory of $\modcat(R)$
consisting of all
C-M $R$-modules and $\refl(R)$
the subcategory of all
reflexive modules.
\end{enumerate}
\end{notn}

\begin{rmk}
We would like to remind the reader
of another characterization
of $\sdim R$.
We consider the splitting prime
$\calp(R)$ of $R$, as defined in
\cite[Definition~3.2]{AE}.
By \cite[Theorem~3.3 and Corollary~3.4]{AE},
$\calp(R)$ is a prime ideal if $\sdim(R) \neq
-\infty$ or the unit ideal otherwise.
Corollary~4.3 of \cite{BST} shows that
$\sdim R = \dim (R/\calp(R))$
when $\sdim(R) \neq -\infty$.
\end{rmk}

\begin{prop} \label{prop:cmpunc}
Assume the following for $R$:
\begin{enumerate}[label=(\arabic*),align=left,leftmargin=*]
  \item $R$ is equidimensional;
  \item $R_P$ is C-M for all
  $P \in \spec R \setminus
  \{\mathfrak{m}\}$; and
  \item $\sdim R > 0$.
\end{enumerate}
Then $R$ is C-M.
\end{prop}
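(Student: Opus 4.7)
The plan is a length count on local cohomology: assume toward a contradiction that $R$ is not Cohen-Macaulay, so $\CH^i_{\m}(R) \neq 0$ for some $i < d := \dim R$, and exhibit a growth conflict with the hypothesis $\sdim R > 0$.

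The first step is to verify that, under our hypotheses, $\CH^i_{\m}(R)$ has finite length for every $i < d$. Because $R$ is $F$-finite, it is a homomorphic image of a regular local ring $S$ of some dimension $n$ (by Gabber), so local duality gives $\CH^i_{\m}(R) \cong \Ext^{n-i}_S(R,S)^{\vee}$. For any $P' \in \spec R \setminus \{\m\}$ with preimage $P \in \spec S$, the equidimensionality of $R$ combined with the catenarity of $S$ forces $\dim S_P - \dim R_{P'} = n - d$, and coupling this with the Cohen-Macaulayness of $R_{P'}$ and the Auslander--Buchsbaum formula yields $\pd_{S_P}(R_{P'}) = n - d$. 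Thus $\Ext^{j}_S(R,S)_P = 0$ for $j > n-d$, so that $\Ext$ module is supported only at $\m$ and hence has finite length; by Matlis duality, the same holds for $\CH^i_{\m}(R)$ whenever $i < d$.

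Next, because $\sdim R > 0$, the ratio $a_e/p^{e\alpha(R)}$ tends to infinity with $e$. Fix $i < d$ and apply the additive functor $\CH^i_{\m}(-)$ to the splitting $\upe R = R^{\oplus a_e} \oplus R_e$ to obtain $\CH^i_{\m}(R)^{\oplus a_e}$ as a direct summand of $\CH^i_{\m}(\upe R)$. An inspection of the \v Cech complex (or the equality $\sqrt{\m^{[q]}} = \m$) identifies $\CH^i_{\m}(\upe R) \cong \upe{\CH^i_{\m}(R)}$ as $R$-modules. An easy induction on length, anchored at $\ell_R(\upe k) = [k:k^{p^e}] = p^{e\alpha(R)}$, then gives
\[
\ell_R\bigl(\upe{\CH^i_{\m}(R)}\bigr) = p^{e\alpha(R)}\, \ell_R\bigl(\CH^i_{\m}(R)\bigr).
\]
Comparing the two expressions yields $a_e \cdot \ell_R(\CH^i_{\m}(R)) \leq p^{e\alpha(R)} \cdot \ell_R(\CH^i_{\m}(R))$. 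If $\CH^i_{\m}(R) \neq 0$, we may cancel to obtain $a_e \leq p^{e\alpha(R)}$ for every $e$, contradicting the growth rate forced by $\sdim R > 0$. Hence $\CH^i_{\m}(R) = 0$ for all $i < d$, so $R$ is Cohen-Macaulay.

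The main obstacle, and the only place equidimensionality enters essentially, is the first step; without it one cannot guarantee finite length of $\CH^i_{\m}(R)$ for $i < d$, and the length cancellation in the final step collapses. The rest of the argument is then a standard Frobenius splitting/length manipulation.
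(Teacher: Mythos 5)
Your argument is correct and follows essentially the same route as the paper: establish finite length of $\CH^i_{\m}(R)$ for $i<d$ using the equidimensionality and punctured-spectrum Cohen-Macaulayness, apply $\CH^i_{\m}$ to the splitting $\upe{R}=R^{\oplus a_e}\oplus R_e$, use $\lambda_R(\upe{\CH^i_{\m}(R)})=p^{e\alpha(R)}\lambda_R(\CH^i_{\m}(R))$, and compare growth rates against $\sdim R>0$. The only cosmetic difference is that you re-derive the finite-length step by hand via local duality and Auslander--Buchsbaum over a regular cover, whereas the paper simply invokes Grothendieck's finiteness theorem after noting $R$ is a homomorphic image of a Gorenstein ring, and you phrase the endgame as a contradiction ($a_e\leq p^{e\alpha(R)}$) rather than taking limits to force $\lambda_R(\CH^i_{\m}(R))=0$.
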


\begin{proof}
Since $R$ is $F$-finite,
it is a homomorphic image
of a Gorenstein ring by
\cite[Remark before Lemma~A.2]
{HMS}. By (1) and (2),
for $0 \leqslant i < d$
we have that $\CH_{\mathfrak{m}}^i
(R)$ has finite length
by \cite[Proposition~21.24]
{24hrs}; see also
\cite[Theorem~9.5.2 (Grothendieck's
Finiteness Theorem)]{lc}. So
{\allowdisplaybreaks
\begin{align*}
   p^{e\alpha(R)} \lambda_R (\CH_{\m}^i(R))
  &= \lambda_R (\upe{\CH_{\m}^i(R)})\\
  &= \lambda_R (\CH_{\m}^i (\upe{R}))\\
  &= \lambda_R (\CH_{\m}^i (R^{\oplus a_e} \oplus M_e))\\
  &= a^e \lambda_R (\CH_{\m}^i (R))
  + \lambda_R (\CH_{\m}^i (M_e))\\
  \frac{1}{p^e} \lambda_R (\CH_{\m}^i(R))
  &= \frac{a_e}{p^{e(1+\alpha(R))}}
  \lambda_R (\CH_{\m}^i (R))
  + \frac{1}{p^{e(1+\alpha(R))}}
  \lambda_R (\CH_{\m}^i (M_e))\\
  0 = \lim_{e \to \infty}
  \frac{1}{p^e} \lambda_R (\CH_{\m}^i(R))
  &\geqslant
  \lim_{e \to \infty}
  \frac{a_e}{p^{e(1+\alpha(R))}}
  \lambda_R (\CH_{\m}^i (R))
\end{align*}}%
Since $\sdim R \geqslant 1$,
we have $\lambda_R (\CH_{\m}^i (R)) = 0$
and hence $\CH_{\m}^i(R) = 0$ for $0 \leqslant i
< d$. Then by \cite[Theorem~10.36]{24hrs}, $R$ is C-M.
\end{proof}

\begin{cor} \label{cor:sdim0}
If $R$ is $F$-split, is
C-M on $\spec R \setminus
\{\mathfrak{m}\}$ (e.g.\ $R$
is an isolated singularity)
but not C-M, then $\sdim R
= 0$. \qed
\end{cor}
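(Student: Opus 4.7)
My plan is to deduce the corollary from Proposition \ref{prop:cmpunc} by contraposition. The hypothesis that $R$ is C-M on the punctured spectrum is exactly hypothesis (2) of the proposition, and equidimensionality (hypothesis (1)) is part of the isolated-singularity context assumed here; thus it suffices to control $\sdim R$ from above and below.

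For the lower bound, the $F$-split hypothesis implies that the splitting prime $\calp(R)$ is a proper prime ideal (the splitting itself witnesses that $1 \notin \calp(R)$), so in particular $\sdim R \neq -\infty$. Applying the formula $\sdim R = \dim R/\calp(R)$ recalled in the remark preceding Proposition \ref{prop:cmpunc} then gives $\sdim R \geq 0$.

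For the upper bound, I would argue by contradiction: if $\sdim R \geq 1$, then all three hypotheses of Proposition \ref{prop:cmpunc} are in place, and the proposition forces $R$ to be C-M, contradicting our assumption. Combining the two bounds gives $\sdim R = 0$.

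The only delicate point is ensuring the equidimensionality hypothesis of Proposition \ref{prop:cmpunc}; this is slightly stronger than ``isolated singularity'' in full generality (e.g.\ reducible Stanley--Reisner rings can be $F$-split and have only an isolated non-C-M point without being equidimensional), but is automatic in the intended normal/domain setting. Modulo this verification, the corollary is a one-line deduction from the proposition together with the basic theory of the splitting prime.
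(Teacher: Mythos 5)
Your proof is correct and is precisely the argument the paper treats as immediate (hence the bare \qed): the splitting-prime characterization gives $\sdim R \geq 0$ from $F$-splitness, and $\sdim R \geq 1$ would contradict the non--C-M hypothesis by Proposition~\ref{prop:cmpunc}, so $\sdim R = 0$. Your caveat about equidimensionality is a fair observation about the statement's hypotheses, but it is automatic in the paper's intended normal/domain setting and is the tacit assumption signalled by the parenthetical ``e.g.\ $R$ is an isolated singularity.''
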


\begin{eg}
Let $X$ be an ordinary Abelian
variety of dimension at least two
and $R$ be the coordinate
ring of an embedding of $X$
with respect to some polarization.
It is well-known, as in \cite{MS,Smi},
that $R$ is $F$-split but not C-M.
So by Corollary~\ref{cor:sdim0},
$\sdim R = 0$.
\end{eg}


\begin{lemma} \label{lem:Yao}
Let $M,N$ be $R$-modules such that $^eM=N^{b_e}\oplus M_e$ and  $\liminf_{e \to \infty} \frac{b_e}{p^{e(k+\alpha(R))}} >0$. Then $\dep N \geqslant k$. 
In particular, if $k = \dim(M)$, then $N$ is C-M.
\end{lemma}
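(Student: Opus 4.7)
The plan is to argue by contradiction: suppose $\dep N = j < k$ so that $H^j_{\mathfrak{m}}(N) \ne 0$, and derive a length inequality that contradicts the hypothesized growth of $b_e$. First, apply $H^j_{\mathfrak{m}}(-)$ to the given decomposition ${}^eM = N^{b_e} \oplus M_e$; since Frobenius twist commutes with local cohomology, this yields ${}^e H^j_{\mathfrak{m}}(M) \cong H^j_{\mathfrak{m}}(N)^{b_e} \oplus H^j_{\mathfrak{m}}(M_e)$, exhibiting $H^j_{\mathfrak{m}}(N)^{b_e}$ as an $R$-summand of ${}^e H^j_{\mathfrak{m}}(M)$.

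Next I pick a nonzero finite-length submodule $V \subseteq H^j_{\mathfrak{m}}(N)$, for instance the socle. Then $V^{b_e}$ is killed by $\mathfrak{m}$, so inside ${}^e H^j_{\mathfrak{m}}(M)$ (with its twisted action) it lies in the $\mathfrak{m}$-socle, which as a subset of the untwisted $H^j_{\mathfrak{m}}(M)$ equals $(0:_{H^j_{\mathfrak{m}}(M)} \mathfrak{m}^{[p^e]})$. Using the standard identity $\lambda_R({}^eA) = p^{e\alpha(R)}\lambda_R(A)$ for finite-length $R$-modules $A$, the containment $\mathfrak{m}^{np^e} \subseteq \mathfrak{m}^{[p^e]}$ for $n = \mu(\mathfrak{m})$, and the polynomial Hilbert--Kunz growth $\lambda_R(0:_{H^j_{\mathfrak{m}}(M)} \mathfrak{m}^s) = O\!\left(s^{\dim H^j_{\mathfrak{m}}(M)}\right)$, I would arrive at
\[
b_e\,\lambda_R(V) \;\le\; p^{e\alpha(R)}\,\lambda_R\!\left(0:_{H^j_{\mathfrak{m}}(M)} \mathfrak{m}^{[p^e]}\right) \;\le\; C\,p^{e(\alpha(R) + \dim H^j_{\mathfrak{m}}(M))}.
\]
Combined with $b_e \ge c\,p^{e(k + \alpha(R))}$ this forces $k \le \dim H^j_{\mathfrak{m}}(M)$.

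The main obstacle is upgrading this dimension inequality into the desired contradiction, since $\dim H^j_{\mathfrak{m}}(M)$ is not a priori controlled by $j$. The plan is to perform a secondary induction on $\dim R$: localizing the decomposition at any $P \ne \mathfrak{m}$ preserves the same $b_e$, and Kunz's identity $\alpha(R_P) = \alpha(R) + \dim(R/P)$ converts the growth hypothesis into $\liminf_e b_e/p^{e((k - \dim R/P) + \alpha(R_P))} > 0$ at $R_P$. The inductive conclusion $\dep_{R_P}(N_P) \ge k - \dim(R/P)$ for all nonmaximal $P$, combined with Faltings' annihilator theorem, forces $H^i_{\mathfrak{m}}(N)$ to have finite length for every $i < k$. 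Re-running the length computation above with $V = H^j_{\mathfrak{m}}(N)$ itself of finite length then tightens the right-hand side sufficiently to close the contradiction. The ``in particular'' statement follows immediately: $N$ is a summand of ${}^eM$, so $\dim N \le \dim M = k$, and together with $\dep N \le \dim N$ this forces $\dep N = \dim N$, making $N$ Cohen--Macaulay.
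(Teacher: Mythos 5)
Your approach differs materially from the paper's. The paper follows Yao's argument verbatim: it chooses a system of parameters $\underline{x}$, uses the identification $H^i_R(\underline{x},{}^eM)\cong {}^eH^i_R(\underline{x}^{p^e},M)$ (so Koszul cohomology of the twist equals twisted Koszul cohomology of Frobenius powers), invokes the standard asymptotic vanishing $\lim_e \lambda_R(H^i(\underline{x}^{p^e},M))/p^{ek}=0$ for $0\le i<k$, and deduces $H^i(\underline{x},N)=0$ for $i<k$ — which is exactly depth sensitivity of the Koszul complex. You instead work directly with local cohomology, socles, and Hilbert function growth, and then try to prop this up with a secondary induction via Faltings.

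There is a concrete gap in the middle of your argument. You assert
$\lambda_R\bigl(0:_{H^j_{\mathfrak m}(M)}\mathfrak m^s\bigr)=O\bigl(s^{\dim H^j_{\mathfrak m}(M)}\bigr)$,
but $H^j_{\mathfrak m}(M)$ is $\mathfrak m$-torsion, so its Krull dimension as an $R$-module is $0$ whenever it is nonzero. Read literally, your estimate claims the annihilator lengths are bounded, which is false: $H^j_{\mathfrak m}(M)$ is not finitely generated, and $\lambda(0:\mathfrak m^s)$ really does grow. The quantity you want is not $\dim H^j_{\mathfrak m}(M)$ but the Krull dimension of its Matlis dual. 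Since $R$ is $F$-finite, it is a quotient of a Gorenstein ring $S$ of some dimension $n$, and local duality gives $H^j_{\mathfrak m}(M)^\vee\cong\Ext^{n-j}_S(M,S)$, which is finitely generated of dimension $\le j$ (by the usual codimension bound on $\Ext$ over a regular or Gorenstein ring). This yields the correct growth $\lambda(0:_{H^j_{\mathfrak m}(M)}\mathfrak m^s)=O(s^j)$, and with $j<k$ the contradiction you want does close — without any secondary induction at all. As written, your final sentence (``re-running the length computation ... tightens the right-hand side sufficiently'') gestures at a conclusion rather than establishing one; and Faltings' annihilator theorem itself requires $R$ to be a homomorphic image of a Cohen--Macaulay ring, which you never justify, though it does hold here by $F$-finiteness. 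In short: your method is salvageable, but the decisive step needs the $\dim\Ext^{n-j}_S(M,S)\le j$ bound, not the dimension of the local cohomology module, and once you have it the Faltings detour is superfluous. The paper's Koszul-cohomology route avoids all of this and is considerably shorter.
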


\begin{proof}
We use the same proof as in \cite[Lemma~2.2]{Yao}.
For $0 \leqslant i < k$,
we have
\[
  0 =
  \lim_{e \to \infty}
  \frac{\lambda_R (
  \CH^i_R (\ul{x}^{p^e},M))}
  {p^{ek}}
  = \lim_{e \to \infty}
  \frac{\lambda_R (
  \CH^i_R (\ul{x},\upe{M}))}
  {p^{e(k+\alpha(R))}}
  \geqslant
  \liminf_{e \to \infty}
  \frac{b_e}
  {p^{e(k+\alpha(R))}}
  \lambda_R (
  \CH^i_R (\ul{x},N))
\]
as in Proposition~\ref{prop:cmpunc}.
So $\CH^i_R (\ul{x},N)=0$
for all $0 \leqslant i < k$,
and hence $\dep N \geqslant k$.
\end{proof}

\begin{rmk}
It is already known that $\sdim R = d
\Rightarrow R$ is strongly
$F$-regular $\Rightarrow$
$R$ is C-M.
\end{rmk}

\begin{rmk}
Let $\upe{M} = R^{b_e} \oplus M_e$ with $b_e$
the largest possible. The largest integer $k$
such that
\[
  \liminf_{e \to \infty} \frac{b_e}{p^{e(k+\alpha(R))}}
  >0
\]
was defined in \cite[Definition 5.4]{AE} to be
$\sdim(M)$, the $s$-dimension of $M$.
\end{rmk}

\begin{defn}
We will let $(\sdim_n)$ to
denote the statement:
$\sdim R_P > 0$ for all
$P \in \spec R$ such that
$\hgt P \geqslant n$.
\end{defn}

\begin{lemma}
If $R$ satisfies $(\sdim_n)$,
then $\hgt \calp(R) < n$.
\end{lemma}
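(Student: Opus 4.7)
The plan is a short proof by contradiction. Assume $\hgt \calp(R) \geq n$; I will show this forces $\sdim R_P = 0$ at the prime $P := \calp(R)$, in direct contradiction with the hypothesis $(\sdim_n)$.

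First I would dispose of the trivial case $n > d$, where $\hgt \calp(R) \leq d < n$ automatically, and thereafter assume $n \leq d$. In that range, $(\sdim_n)$ applied at the maximal ideal $\m$ (which has height $d \geq n$) yields $\sdim R = \sdim R_{\m} > 0$, so by the remark preceding Proposition~\ref{prop:cmpunc} the splitting prime $\calp(R)$ is a proper prime ideal of $R$. Setting $P := \calp(R)$, our contradictory assumption $\hgt P \geq n$ lets us apply $(\sdim_n)$ once more, this time at $P$, to obtain
\[
  \sdim R_P > 0.
\]

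Next I would invoke the compatibility of the splitting prime with localization at primes containing it, due to Aberbach--Enescu \cite{AE}: for every prime $\q \supseteq \calp(R)$ one has $\calp(R_\q) = \calp(R) R_\q$. Applying this at $\q = P = \calp(R)$ shows that $\calp(R_P) = P R_P$ is the maximal ideal of $R_P$. Combining this with the formula $\sdim S = \dim\bigl(S/\calp(S)\bigr)$ for $\sdim S \neq -\infty$ (the remark before Proposition~\ref{prop:cmpunc}) then gives
\[
  \sdim R_P \;=\; \dim\!\bigl(R_P/\calp(R_P)\bigr) \;=\; \dim\!\bigl(R_P/PR_P\bigr) \;=\; 0,
\]
which contradicts $\sdim R_P > 0$. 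Hence $\hgt \calp(R) < n$.

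The only non-tautological ingredient is the localization identity $\calp(R_\q) = \calp(R) R_\q$ for $\q \supseteq \calp(R)$, and this is where I expect the argument to cite machinery from \cite{AE}; the rest is bookkeeping with heights together with a direct application of $(\sdim_n)$ at two primes ($\m$, to know that $\calp(R)$ is proper, and $P$ itself).
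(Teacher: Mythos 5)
Your argument is correct and is essentially the paper's own proof, merely reorganized as a proof by contradiction. Both proofs hinge on the Aberbach--Enescu localization identity $\calp(R_{\calp(R)}) = \calp(R) R_{\calp(R)}$ and the observation that this makes $\calp(R_{\calp(R)})$ the maximal ideal of $R_{\calp(R)}$, forcing $\sdim R_{\calp(R)} = 0$; the paper then concludes directly from $(\sdim_n)$ that $\hgt\calp(R)<n$, while you phrase the last step as a contradiction. The only genuine difference is that you pause to verify $\calp(R)$ is a proper prime ideal (by applying $(\sdim_n)$ at $\m$ when $n \leq d$, and disposing of $n > d$ separately), a point the paper's proof passes over silently before localizing at $\calp(R)$; this is a modest but worthwhile tightening rather than a different method.
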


\begin{proof}
Suppose that $R$ satsifies
$(\sdim_n)$. By \cite[Proposition~3.6]
{AE}, we have
$\calp(R_{\calp(R)}) =
\calp(R) R_{\calp(R)}$.
Since $R$ is $F$-finite
and reduced, so is
$R_{\calp(R)}$.
By \cite[Corollary~3.4]{AE},
since $\calp(R_{\calp(R)})$
is the maximal ideal of
$R_{\calp(R)}$, we have
$\sdim R_{\calp(R)} = 0$,
so $\hgt \calp(R) < n$.
\end{proof}

\section{Main technical results}\label{mainSec}

\begin{lemma} \label{lem:isocodim1}
Let $f \colon M \to N$
be a homomorphism of $R$-modules,
where $M$ is $(S_2)$ and
$N$ is $(S_1)$. Suppose
that $f$ is an isomorphism
in codimension 1. Then
$f$ is an isomorphism.
\end{lemma}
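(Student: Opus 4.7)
The plan is to form the kernel $K := \ker f$ and cokernel $C := \cok f$ of $f$ and to show both vanish, using the Serre conditions on $M$ and $N$ together with standard depth-lemma arguments applied at well-chosen primes. The approach is classical: $K$ dies because its associated primes would have to be associated primes of $M$, which are too ``small'' to sit in the codimension $\geq 2$ locus where $C$ and $K$ are supported; and $C$ dies by a depth count against the Serre hypotheses.

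For the kernel, I would use $\Ass K \subseteq \Ass M$. Since $M$ satisfies $(S_2)$, in particular $(S_1)$, every $\mathfrak{p} \in \Ass M$ is a minimal prime of $\Supp M$, so $\dim M_{\mathfrak{p}} = 0$. In particular such $\mathfrak{p}$ lies in the codimension one locus where $f$ is assumed to be an isomorphism, hence $K_{\mathfrak{p}} = 0$. This contradicts $\mathfrak{p} \in \Ass K$ unless $\Ass K = \emptyset$, i.e. $K = 0$.

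Having reduced to the short exact sequence $0 \to M \to N \to C \to 0$, I would show $C = 0$ by contradiction. Suppose $C \neq 0$ and choose $\mathfrak{p}$ minimal in $\Supp C$; then $C_{\mathfrak{p}}$ has finite length over $R_{\mathfrak{p}}$, so $\dep C_{\mathfrak{p}} = 0$. The codimension one hypothesis forces $\mathfrak{p}$ to have codimension at least $2$ in the common support, so $\dim M_{\mathfrak{p}} \geq 2$ and $\dim N_{\mathfrak{p}} \geq 2$. Then $(S_2)$ for $M$ gives $\dep M_{\mathfrak{p}} \geq 2$, and $(S_1)$ for $N$ gives $\dep N_{\mathfrak{p}} \geq 1$. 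Applying the depth lemma to the localized sequence yields
\[
\dep C_{\mathfrak{p}} \;\geq\; \min\bigl(\dep N_{\mathfrak{p}},\; \dep M_{\mathfrak{p}} - 1\bigr) \;\geq\; \min(1,1) \;=\; 1,
\]
contradicting $\dep C_{\mathfrak{p}} = 0$. Hence $C = 0$, and $f$ is an isomorphism.

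The only real obstacle is bookkeeping: making sure the conventions match up, namely that ``isomorphism in codimension 1'' exactly provides vanishing of $K$ and $C$ at each prime of height $\leq 1$ (so at all associated primes of $M$ and at all primes strictly below a minimal element of $\Supp C$), and that minimality of $\mathfrak{p}$ in $\Supp C$ indeed yields $\dep C_{\mathfrak{p}} = 0$. With these checked, the two steps above are each a one-line application of the Serre condition plus the depth lemma.
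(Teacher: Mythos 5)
The paper states this lemma without proof, treating it as standard, so there is no proof of its own to compare against. Your kernel--cokernel argument is correct and is the standard one. The caveat you flag at the end is real and worth spelling out: the step from ``$\mathfrak{p}$ minimal in $\Supp M$'' (what $(S_1)$ gives via the absence of embedded primes) to ``$\hgt_R\mathfrak{p}\leq 1$,'' and the step from ``$\hgt\mathfrak{p}\geq 2$'' to ``$\dep M_{\mathfrak{p}}\geq 2$,'' require either $\Supp M=\Spec R$ or a reading of the Serre condition measured against $\hgt_R\mathfrak{p}$ rather than $\dim M_\mathfrak{p}$. Without one of these the bare statement fails: over a three-dimensional local domain take $M=N=R/\mathfrak{p}$ for a height-two prime $\mathfrak{p}$ (a one-dimensional C--M ring, hence $(S_2)$ in the $\dim M_\mathfrak{p}$ sense) and let $f$ be multiplication by an element of $\mathfrak{m}\setminus\mathfrak{p}$; then $f$ is vacuously an isomorphism in codimension one yet is not an isomorphism. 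Every module to which the paper applies the lemma has full support, so with that convention understood your proof is complete for the paper's purposes.
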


\begin{rmk}
In many of the results below,
we can replace the assumption of
a module being locally free
in codimension 1 by $R$
being quasinormal.
\end{rmk}

\begin{cor} \label{cor:dbldual}
Let $L,N$
be $R$-modules such
that $L$ is
free in codimension 1
and $N$ is $(S_2)$.
Then
\[
  \chom_R(L,N) 
  \cong
  \chom_R(L^{**},N)
\]
\end{cor}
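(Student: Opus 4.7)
The plan is to apply Lemma~\ref{lem:isocodim1} to the natural map
\[
\phi \colon \chom_R(L^{**}, N) \longrightarrow \chom_R(L, N)
\]
obtained by precomposing with the canonical biduality map $L \to L^{**}$. For this I need to verify two things: (i) the source of $\phi$ is $(S_2)$ and the target is $(S_1)$; and (ii) $\phi$ is an isomorphism after localization at every prime of height at most one.

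For (i), I would invoke the classical fact that $\chom_R(X, N)$ inherits the $(S_2)$ property from $N$ whenever $X$ is finitely generated. Starting from a finite presentation $R^a \to R^b \to X \to 0$, left-exactness of $\chom_R(-, N)$ yields $0 \to \chom_R(X, N) \to N^b \to N^a$. Writing $I$ for the image of the right-hand map, a short exact sequence $0 \to \chom_R(X, N) \to N^b \to I \to 0$ emerges. Since $I$ is a submodule of the $(S_1)$ module $N^a$, it is itself $(S_1)$, and the depth lemma then forces $\chom_R(X, N)$ to be $(S_2)$. Applying this to $X = L$ and $X = L^{**}$ simultaneously establishes both Serre conditions demanded by Lemma~\ref{lem:isocodim1}.

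For (ii), the hypothesis that $L$ is free in codimension one means that for every $P \in \spec R$ with $\height P \leq 1$, the localization $L_P$ is a free $R_P$-module, hence reflexive; consequently the localized map $(L \to L^{**})_P$ is an isomorphism. Since $\chom$ and biduality commute with localization for finitely presented modules over a noetherian ring, $\phi_P$ is also an isomorphism. Lemma~\ref{lem:isocodim1} then upgrades this local statement to the desired global isomorphism. The only nontrivial input here is the behavior of $\chom_R(-, N)$ with respect to $(S_2)$, which is standard; the rest is formal bookkeeping.
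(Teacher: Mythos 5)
Your proof is correct and follows essentially the same path as the paper's: both apply Lemma~\ref{lem:isocodim1} to the map $\chom_R(L^{**},N)\to\chom_R(L,N)$ induced by biduality, checking that it is an isomorphism in codimension one because $L$ is free there, and establishing the needed Serre conditions by taking a finite presentation of the source of $\chom_R(-,N)$ and using left-exactness. The paper simply asserts that $\chom_R(M,N)$ is $(S_2)$ once it sits as the kernel of a map between $(S_2)$ modules, whereas you fill in the depth-lemma bookkeeping; that is a welcome elaboration, not a different argument.
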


\begin{proof}
Let $f \colon L \to L^{**}$
be the canonical map and
$\bar{f} \colon \chom_R(L^{**},
N) \to \chom_R(L,N)$ the
induced map. Then $\bar{f}$
is an isomorphism in
codimension 1. By
Lemma~\ref{lem:isocodim1},
it suffices to show that
$\chom_R(M,N)$ is $(S_2)$
for every $R$-module $M$.
Given $M$, let $F_1
\to F_0 \to M \to 0$ be a
finite presentation of $M$.
Applying $\chom_R(-,N)$
gives
\[
  0 \to \chom_R(M,N)
  \to \chom_R(F_0,N)
  \to \chom_R(F_1,N)
\]
Since $N$ is $(S_2)$, so
are $\chom_R(F_i,N)$ for
$i = 1,2$ and
$\chom_R(M,N)$, as required.
\end{proof}

\begin{cor} \label{cor:dblcech}
Suppose that $R$ is C-M with
a canonical module $\omega_R$.
Suppose that $M$ is $(S_2)$
and $M^{\vee}$ is MCM. Then
$M$ is MCM.
\end{cor}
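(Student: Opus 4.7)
The plan is to show that the natural biduality map $\phi \colon M \to M^{\vee\vee}$ is an isomorphism, from which the result follows at once: having $M \isom M^{\vee\vee} = \chom_R(M^\vee, \omega_R)$, the standard canonical duality on $\CM(R)$ (i.e.\ that $\chom_R(-,\omega_R)$ is an involution on MCM modules over a CM ring with canonical module) together with the hypothesis that $M^\vee$ is MCM will give $M^{\vee\vee}$ MCM, hence $M$ MCM.

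To show $\phi$ is an isomorphism I would apply Lemma~\ref{lem:isocodim1}. The source $M$ is $(S_2)$ by hypothesis. The target $M^{\vee\vee}$ is automatically $(S_2)$ (and in particular $(S_1)$), since the functor $\chom_R(-,\omega_R)$ produces $(S_2)$ modules over a Cohen-Macaulay ring with canonical module. By Lemma~\ref{lem:isocodim1} it then suffices to verify that $\phi_P$ is an isomorphism at every prime $P$ of height at most $1$. At a minimal prime, $R_P$ is a field and the claim is immediate. At a prime $P$ of height $1$, the $(S_2)$ hypothesis forces $\depth M_P \geq 1$, so $M_P$ is torsion-free over the one-dimensional CM local ring $R_P$ and is therefore MCM over $R_P$; since $\chom_{R_P}(-,\omega_{R_P})$ is a duality on MCM modules of $R_P$, the localized map $\phi_P$ is an isomorphism.

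The only substantive step is the codimension-one verification of reflexivity of $\phi$, and even that reduces to the well-known fact that torsion-free modules over a one-dimensional CM local ring with canonical module are $\omega$-reflexive. The rest is bookkeeping with Lemma~\ref{lem:isocodim1} and standard canonical duality, so I do not anticipate any genuine obstacles.
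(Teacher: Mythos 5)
Your argument is essentially the paper's own: both consider the biduality map $f\colon M\to M^{\vee\vee}$, observe that $(S_2)$ forces $M_P$ to be MCM (hence $\omega$-reflexive) in codimension $\leq 1$, invoke Lemma~\ref{lem:isocodim1} to upgrade this to a global isomorphism, and then conclude via the canonical duality fact that $M^\vee$ MCM implies $M^{\vee\vee}$ MCM. The only cosmetic difference is that you verify the $(S_1)$ hypothesis on $M^{\vee\vee}$ needed for Lemma~\ref{lem:isocodim1} by noting $\chom_R(-,\omega_R)$ always outputs $(S_2)$ modules, whereas the paper gets it for free from the already-established fact that $M^{\vee\vee}$ is MCM.
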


\begin{proof}
Consider the natural map
$f \colon M \to M^{\vee \vee}$.
In codimension 1,
$M$ is MCM since it is $(S_2)$,
so $f$ is an isomorphism.
Since $M^{\vee}$ is MCM,
so is $M^{\vee \vee}$.
Hence $f$ is an isomorphism
by Lemma~\ref{lem:isocodim1}.
\end{proof}

\begin{rmk} \label{rmk:localization}
Let $M$ be an $R$-module,
$f \colon R \to S$ be a ring
homomorphism, $U$ be a
multiplicative subset of $S$
and $T = f^{-1} (U)$.
Then we have
$(M \otimes_R S)_U
= M \otimes_R S \otimes_S S_U
= M \otimes_R S_U
= M \otimes_R R_T
\otimes_{R_T} S_U
= M_T \otimes_{R_T} S_U$.
\end{rmk}

\begin{lemma} \label{lem:dbldualringext}
Let $R \xrightarrow{f_1} R_1
\xrightarrow{f_2} R_2$
be a sequence of ring
homomorphisms where
$R_2$ is $(S_2)$.
Let $f =f_2 \circ f_1$.
Let $M$ be an $R$-module.
Suppose that $M_P$ is free
for every $P \in \spec R$
of height 1 and
for every $P = f^{-1}(Q)$
such that $Q \in \spec R_2$
of height 1.
Let $M_i = (M \otimes_R
R_i)^{**}$ (over $R_i$)
for $i=1,2$.
Then $(M_1 \otimes_{R_1}
R_2)^{**} \cong M_2$.
\end{lemma}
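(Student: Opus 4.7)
The plan is to construct a canonical $R_2$-linear comparison map $\phi \colon M_2 \to (M_1 \otimes_{R_1} R_2)^{**}$, observe that both source and target are $(S_2)$ as $R_2$-modules, and then apply Lemma~\ref{lem:isocodim1} after verifying that $\phi_Q$ is an isomorphism for every $Q \in \spec R_2$ of height at most one.

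To construct $\phi$, I start from the $R_1$-linear biduality map $M \otimes_R R_1 \to M_1$, base change along $f_2$ to obtain the $R_2$-linear map $M \otimes_R R_2 \cong (M \otimes_R R_1) \otimes_{R_1} R_2 \to M_1 \otimes_{R_1} R_2$, and then apply the $R_2$-double dual functor together with the universal property of biduality. Both $M_2$ and $(M_1 \otimes_{R_1} R_2)^{**}$ are $R_2$-double duals, so the exact argument used in the proof of Corollary~\ref{cor:dbldual} (requiring only that $R_2$ is $(S_2)$) shows that each is $(S_2)$.

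The heart of the proof is the codimension-one check. Fix $Q \in \spec R_2$ with $\hgt Q = 1$, let $Q_1 = f_2^{-1}(Q)$, and set $P = f^{-1}(Q) = f_1^{-1}(Q_1)$. By hypothesis $M_P$ is $R_P$-free of some rank $r$. Using Remark~\ref{rmk:localization} together with the standard compatibility of $\Hom$ with localization of finitely generated modules, I compute
\[
(M_1)_{Q_1} = ((M \otimes_R R_1)_{Q_1})^{**} = (M_P \otimes_{R_P} (R_1)_{Q_1})^{**} \cong (R_1)_{Q_1}^{\oplus r},
\]
and then $(M_1 \otimes_{R_1} R_2)_Q \cong (R_2)_Q^{\oplus r}$, which already equals its $R_2$-double dual, so $((M_1 \otimes_{R_1} R_2)^{**})_Q \cong (R_2)_Q^{\oplus r}$. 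A parallel computation, starting from the same freeness of $M_P$, yields $(M_2)_Q \cong (R_2)_Q^{\oplus r}$, and unwinding the construction shows $\phi_Q$ is the identity in these trivializations. Minimal primes $Q$ of $R_2$ contained in some height-one prime are handled by transitivity of localization.

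The main obstacle I expect is the bookkeeping to confirm that, after passing through the chain of natural isomorphisms (biduality, base change, localization via Remark~\ref{rmk:localization}), the map $\phi_Q$ genuinely agrees with the identity under the two trivializations; this is routine but requires careful tracking of which $\Hom$ is taken over which ring. The potentially subtle case of a minimal-but-maximal prime of $R_2$ is absorbed by Lemma~\ref{lem:isocodim1}, whose $(S_2)$-on-source and $(S_1)$-on-target hypotheses are precisely what one needs to conclude that the kernel and cokernel of $\phi$, being supported in codimension at least two, must vanish.
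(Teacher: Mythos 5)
Your proposal is correct and follows essentially the same route as the paper: construct the comparison map from the $R_1$-biduality of $M\otimes_R R_1$, base-change along $f_2$, take the $R_2$-double dual to land on $M_2 \to (M_1\otimes_{R_1}R_2)^{**}$, verify the map is an isomorphism at height-$\leqslant 1$ primes of $R_2$ via the freeness of $M_P$ and Remark~\ref{rmk:localization}, and finish with Lemma~\ref{lem:isocodim1}. One small simplification: you need not track "identity in trivializations"; it suffices to note that after localizing at $P_1 = f_2^{-1}(Q)$ the biduality map $(M\otimes_R R_1)_{P_1}\to\bigl((M\otimes_R R_1)_{P_1}\bigr)^{**}$ is itself an isomorphism because $(M\otimes_R R_1)_{P_1}\cong M_P\otimes_{R_P}(R_1)_{P_1}$ is free, and tensoring this isomorphism up to $(R_2)_Q$ is exactly the localized comparison map.
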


\begin{proof}
Let $N = M \otimes_R R_1$.
The natural map $N \to M_1
= N^{**}$ gives rise to
a map $g \colon
M \otimes_R R_2 =
M \otimes_{R} R_1
\otimes_{R_1} R_2
= N \otimes_{R_1} R_2
\to N^{**} \otimes_{R_1}
R_2 = M_1 \otimes_{R_1}
R_2$. Let $Q \in \spec R_2$,
$P_1 = (f_2)^{-1}(Q)$
and $P = f^{-1}(Q)$.
Then $(M \otimes_R R_2)_Q
= M_P \otimes_{R_P}
(R_2)_Q$ and
$(M_1 \otimes_{R_1} R_2)_Q
= (M \otimes_R R_1)^{**}_{P_1}
\otimes_{(R_1)_{P_1}} (R_2)_Q
= (M_P \otimes_{R_P} (R_1)_{P_1})^{**}
\otimes_{(R_1)_{P_1}} (R_2)_Q$,
so $g$ is an isomorphism
in codimension 1.
Applying ${}^{**}$
gives the map $M_2 = (M
\otimes_R R_2)^{**} \to
(M_1 \otimes_{R_1}
R_2)^{**}$, which is an
isomorphism by
Lemma~\ref{lem:isocodim1}
since $R_2$ is $(S_2)$.
\end{proof}

\begin{cor} \label{cor:meplusf}
(``Index shifting'')
Let $R$ be $(S_2)$ and $M$ be as in
Definition~\ref{defn:intro}.
Let $e,f$ be nonnegative
integers. Then $[M(e)](f)
\cong M(e+f)$.
\end{cor}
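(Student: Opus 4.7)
The plan is to apply Lemma~\ref{lem:dbldualringext} to the composition of Frobenius maps
\[
R \xrightarrow{\;F^e\;} \upe{R} \xrightarrow{\;F^f\;} {}^{e+f}R,
\]
where each arrow is the appropriate iterate of Frobenius viewed as a ring homomorphism; as rings, all three are $R$ itself, equipped with various twisted $R$-algebra structures.

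First I would verify the hypotheses of Lemma~\ref{lem:dbldualringext}. Since $R$ is $(S_2)$ and ${}^{e+f}R$ coincides with $R$ as a ring, the condition that $R_2$ is $(S_2)$ is immediate. Because $R$ is reduced and $F$-finite, the map on spectra induced by Frobenius is a height-preserving homeomorphism, so the assumption that $M$ is locally free at height-$1$ primes of $R$ yields local freeness at the contracted primes demanded by the lemma.

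Next, applying Lemma~\ref{lem:dbldualringext} with $R_1 = \upe{R}$, $R_2 = {}^{e+f}R$ and $M_i = (M \otimes_R R_i)^{**}$ (where the reflexive hull in index $i$ is taken over $R_i$) produces an isomorphism
\[
\bigl(M_1 \otimes_{R_1} R_2\bigr)^{**} \cong M_2.
\]
It remains to translate this through the identifications $\upe{R} = R$ and ${}^{e+f}R = R$ as rings. Under these identifications, $M_1$ becomes $M(e)$ as an $R$-module; moreover, ${}^{e+f}R$ regarded as an $\upe{R}$-module via $F^f$ is precisely $\upf{(\upe{R})}$, so $M_1 \otimes_{R_1} R_2$ becomes $F^f(M(e))$; and $M_2$ becomes $M(e+f)$. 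Substituting yields the desired $[M(e)](f) \cong M(e+f)$.

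The only delicate point I anticipate is the bookkeeping of the various Frobenius twists of the $R$-module structure on $R$, and checking that the reflexive-hull operation (taken over each $R_i$ in the lemma) is compatible with the identifications used to pass between $M(e)$, $M(e+f)$, and reflexive hulls over $R$ itself. Once that compatibility is recorded, the corollary follows directly from Lemma~\ref{lem:dbldualringext}.
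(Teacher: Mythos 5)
Your proposal is correct and takes essentially the same route as the paper: the paper's proof also sets $R_1 = \upe{R}$ and $R_2 = {}^{e+f}R$ and invokes Lemma~\ref{lem:dbldualringext} directly, unwinding the identifications exactly as you describe. The extra verification you supply (that $R_2$ is $(S_2)$ and that the Frobenius homeomorphism on $\Spec R$ reduces the lemma's height-one freeness hypotheses to the one already assumed for $M$) is correct, though the paper leaves it implicit.
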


\begin{proof}
Let $R_1 = \upe{R}$ and $R_2
= \fourIdx{e+f}{}{}{}{R}$ as in
Lemma~\ref{lem:dbldualringext}.
Then we have $M(e) = M_1$,
and so $[M(e)](f)
= (M_1 \otimes_{R_1}
R_2)^{**} \cong M_2
= M(e+f)$.
\end{proof}

\begin{thm} \label{thm:mainweak}
Suppose that $R$ is
$(S_2)$ and
equidimensional.
Let $M,N$ be $R$-modules
such that $M \in \FF(R)$
and $N$ is $(S_2)$.
Assume that $(N,L)$
is an abundant pair
and that $N_P \in \add L_P$
for all $P \in
\spec R$ such that
$3 \leqslant \hgt(P)<d$.
Assume further
that for every $P \in
\spec R$ such that
$3 \leqslant \hgt(P)<d$
and $e \geqslant 0$,
$(\chom_R(M(e),L))_P$
is MCM.
Then $\chom_R(M(e),L)$
is MCM for all
$e \geqslant 0$.
\end{thm}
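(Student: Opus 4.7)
The plan is to show $\Hom_R(M(e),L)$ is maximal Cohen--Macaulay by establishing MCM-ness on the punctured spectrum and then ruling out nonvanishing local cohomology at $\m$ via a Frobenius--Hom length identity fed by the abundant-pair hypothesis.

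First, since $(N,L)$ is an abundant pair, $b_{e'}\geq 1$ for infinitely many $e'$, so $L$ is a direct summand of ${}^{e'}N$. The Frobenius twist ${}^{e'}(-)$ preserves depth over $R$: a sequence $x_1,\ldots,x_k\in R$ is regular on ${}^{e'}N$ iff $x_1^{p^{e'}},\ldots,x_k^{p^{e'}}$ is regular on $N$, which is equivalent to $x_1,\ldots,x_k$ being regular on $N$ by passing through associated primes (which are closed under $p^{e'}$-powers, being prime ideals). Thus $L$ inherits the $(S_2)$ property of $N$ at every prime, and a finite presentation of $M(e)$ combined with the depth-lemma argument in the proof of Corollary~\ref{cor:dbldual} yields that both $\Hom_R(M(e),L)$ and $\Hom_R(M(e+e'),N)$ are $(S_2)$. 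Hypothesis~(5), together with hypothesis~(4) (which, via $N_P\in\add L_P$, transports MCM-ness from $\Hom_R(M(e+e'),L)_P$ to $\Hom_R(M(e+e'),N)_P$ at primes $P$ with $3\leq\hgt P<d$), then implies that both modules are MCM on the punctured spectrum, so $\lambda_R(\CH^i_\m(-))$ is finite for $0\leq i<d$.

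Next I set up the main identification. Hom--tensor adjunction for the Frobenius map $R\to{}^{e'}R$ yields
\[
  \Hom_R(M(e),{}^{e'}N) \;\cong\; {}^{e'}\!\bigl(\Hom_R(F^{e'}(M(e)),N)\bigr)
\]
as $R$-modules, the Frobenius twist on the right arising because the two $R$-actions (via ${}^{e'}N$ on the left, via $N$ on the right) differ by Frobenius. Since $F^{e'}(M(e))$ is free in codimension one (as $M$ is, and Frobenius preserves this) and $N$ is $(S_2)$, Corollary~\ref{cor:dbldual} replaces $\Hom_R(F^{e'}(M(e)),N)$ by $\Hom_R((F^{e'}(M(e)))^{**},N)$, and Corollary~\ref{cor:meplusf} identifies $(F^{e'}(M(e)))^{**}$ with $M(e+e')$. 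Combining with the abundant-pair decomposition ${}^{e'}N=L^{\oplus b_{e'}}\oplus N_{e'}$ and taking $\CH^i_\m$-lengths --- using that the Frobenius twist scales $R$-length by $p^{e'\alpha(R)}$, as in the computation in the proof of Proposition~\ref{prop:cmpunc} --- gives, for each $0\leq i<d$,
\[
  p^{e'\alpha(R)}\,\lambda_R\!\bigl(\CH^i_\m(\Hom_R(M(e+e'),N))\bigr) = b_{e'}\,\lambda_R\!\bigl(\CH^i_\m(\Hom_R(M(e),L))\bigr) + \lambda_R\!\bigl(\CH^i_\m(\Hom_R(M(e),N_{e'}))\bigr),
\]
and hence
\[
  \lambda_R\!\bigl(\CH^i_\m(\Hom_R(M(e),L))\bigr) \;\le\; \frac{p^{e'\alpha(R)}}{b_{e'}}\,\lambda_R\!\bigl(\CH^i_\m(\Hom_R(M(e+e'),N))\bigr).
\]

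To finish, I let $e'\to\infty$ along a subsequence witnessing abundance, on which $p^{e'\alpha(R)}/b_{e'}\to 0$. The main obstacle --- the technical heart of the argument --- is to control the factor $\lambda_R(\CH^i_\m(\Hom_R(M(e+e'),N)))$ so that the right-hand side actually tends to zero and is not overwhelmed by its growth in $e'$. Here the finite $F$-type hypothesis is essential: all $M(e+e')$ lie in $\add X$ for a single $X$ with finitely many indecomposable summands $X_j$, so the length in question decomposes as a nonnegative integer combination of the fixed quantities $\lambda_R(\CH^i_\m(\Hom_R(X_j,N)))$, and iterating the same identity --- swapping the roles of $L$ and $N$ wherever hypothesis~(4) permits --- lets one bootstrap the vanishing of these finitely many contributions. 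Avoiding the naive rank bound (which would demand a growth condition on $b_{e'}$ strictly stronger than abundance) by carefully exploiting the $\add X$ structure is the essential technical point.
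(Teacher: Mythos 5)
Your proposal follows the same overall strategy as the paper --- establish that $\Hom_R(M(e),N)$ has finite-length local cohomology via the punctured-spectrum hypotheses, set up the Frobenius--$\Hom$ isomorphism and the length identity coming from the abundant-pair decomposition, and then pass to the limit --- and your derivation of the central inequality
\[
  \lambda_R\bigl(\CH^i_\m(\Hom_R(M(e),L))\bigr) \leqslant
  \frac{p^{e'\alpha(R)}}{b_{e'}}\,\lambda_R\bigl(\CH^i_\m(\Hom_R(M(e+e'),N))\bigr)
\]
is correct and matches the paper's (which does it for $e=0$ and then applies index shifting). You also make explicit two points the paper leaves tacit: that $L$ is $(S_2)$ because Frobenius preserves depth, and that the $\Hom$'s are $(S_2)$ via a finite presentation.

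The place you go astray is the final paragraph. You worry that the ``naive rank bound'' on $\lambda_R(\CH^i_\m(\Hom_R(M(e+e'),N)))$ ``would demand a growth condition on $b_{e'}$ strictly stronger than abundance,'' and you propose an iterated ``bootstrapping'' argument swapping $L$ and $N$. This step is unnecessary, and the worry behind it is unfounded. Since $M\in\FF(R)$, Corollary~\ref{cor:periodicity} (or directly Lemma~\ref{lem:addx}) shows there are only \emph{finitely many isomorphism classes} among the $M(e+e')$; moreover each is torsion-free of fixed rank $\rk M$, so when it decomposes in $\add_R(X)$ the number of summands is uniformly bounded. Consequently $\lambda_R(\CH^i_\m(\Hom_R(M(e+e'),N)))$ takes only finitely many (finite) values as $e'$ varies, so it is bounded above by a fixed constant $C$. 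Then the inequality reads
\[
  \lambda_R\bigl(\CH^i_\m(\Hom_R(M(e),L))\bigr) \leqslant
  \frac{p^{e'\alpha(R)}}{b_{e'}}\cdot C,
\]
and $\liminf_{e'} p^{e'\alpha(R)}/b_{e'}=0$ --- which is exactly the abundant-pair hypothesis, not a strengthening of it --- forces the left side to vanish. There is no need to re-derive the identity with $L$ and $N$ interchanged, nor does hypothesis~(4) play the role of ``permitting a swap''; it is used only once, to promote the MCM-ness of $\Hom_R(M(e),L)_P$ to $\Hom_R(M(e),N)_P$ on the punctured spectrum, giving the finite lengths. So the essential technical point you identify is real (the finite $F$-type hypothesis is what caps the right-hand side), but the mechanism is the direct boundedness by the finitely many isomorphism classes, not a bootstrap.
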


\begin{proof}
If $d \leqslant 2$, then
$\chom_R(M(e),L)$
is MCM since $L$
is $(S_2)$. So we may
assume that $d \geqslant
3$. By assumption,
for $P \in \spec R
\setminus \{\mathfrak{m}\}$,
we have $N_P \in \add L_P$
for $\hgt(P) \geqslant 3$,
so $(\chom_R(M(e),N))_P$
is MCM for $e \geqslant 0$.
So for $0 \leqslant i < d$ and
$e \geqslant 0$,
$\CH^i_{\mathfrak{m}}
(\chom_R(M(e),N))$ has
finite length
--- see Proposition~\ref{prop:cmpunc}.
By Corollary~\ref{cor:dbldual},
we have
{\allowdisplaybreaks
\begin{align*}
  \chom_{{}^e R}(\fourIdx{}{{}^e R}{}{}{[F^e_R(M)^{**}]},
  \fourIdx{}{{}^e R}{}{}{N})
  &= \chom_{{}^e R}
  (\fourIdx{}{{}^e R}{}{}{[(M \otimes_R \upe{R})^{**}]},
  \fourIdx{}{{}^e R}{}{}{N})\\
  &= \chom_{{}^e R}
  (\fourIdx{}{{}^e R}{}{}{[M \otimes_R \upe{R}]},
  \fourIdx{}{{}^e R}{}{}{N})\\
  &= \chom_R (M,
  \chom_{{}^e R}(\upe{R},
  \fourIdx{}{{}^e R}{}{}{N}))\\
  &= \chom_R (M,
  \fourIdx{}{{}^e R}{}{}{N})\\
  &= \chom_R (M,
  \upe{N})\\
  &= \chom_R (M,
  L^{\oplus b_e} \oplus
  N_e)\\
  &= \chom_R(M,
  L)^{\oplus b_e}
  \oplus \chom_R(M,N_e)
\end{align*}}%

Apply $\CH_{\m}^i$ for $0 \leqslant i < d$
to get
{\allowdisplaybreaks
\begin{align*}
  b_e \lambda_R(\CH_{\m}^i(\chom_R(M,L)))
  &\leqslant \lambda_R(\CH_{\m}^i(
  \chom_{{}^e R}(\fourIdx{}{{}^e R}{}{}{[F^e_R(M)^{**}]},
  \fourIdx{}{{}^e R}{}{}{N})
  ))\\
  &=
  \lambda_R(
  \CH_{\mathfrak{m}({}^e R)}^i(
  \chom_{{}^e R}(\fourIdx{}{{}^e R}{}{}{[F^e_R(M)^{**}]},
  \fourIdx{}{{}^e R}{}{}{N})
  ))
  \text{\quad (base change)}\\
  &=
  \lambda_R(
  \CH_{{}^e \m}^i(
  \chom_{{}^e R}(\fourIdx{}{{}^e R}{}{}{[F^e_R(M)^{**}]},
  \fourIdx{}{{}^e R}{}{}{N})
  ))\\
  &=
  \lambda_R(
  \fourIdx{e}{}{i}{\m}
  {\CH}(
  \chom_R(M(e),
  \fourIdx{}{R}{}{}{N})
  ))\\
  &= p^{e\alpha(R)}
  \lambda_R(
  \fourIdx{}{}{i}{\m}{\CH}(
  \chom_R(M(e),
  \fourIdx{}{R}{}{}{N})
  ))\\
  \lambda_R(
  \CH_{\m}^i(\chom_R(M,L)))
  &\leqslant
  \frac{p^{e\alpha(R)}}{b_e}
  \max\{
  \lambda_R(
  \fourIdx{}{}{i}{\mathfrak{m}}
  {\CH}(
  \chom_R(M(e),
  \fourIdx{}{R}{}{}{N})
  ))\}
\end{align*}}%

Taking $\liminf$ shows
that $\CH_{\m}^i(\chom_R(M,L))=0$ for
$0 \leqslant i < d$,
so $\chom_R(M,L)$ is MCM.
By Corollary~\ref{cor:meplusf},
we have $[M(e)](f) \cong
M(e+f)$. So we may
replace $M$ by $M(e)$
to conclude that
$\chom_R(M(e),L)$ is MCM.
\end{proof}

\begin{rmk}
By Lemma~\ref{lem:addx}, we only need to check that
$(\chom_R(M_i,L))_P$ is MCM for $3 \leqslant \hgt(P)
< d$ for the finitely many indecomposable modules
$M_i$ that appear among $\{M(e)\}_{e \geqslant 0}$.
\end{rmk}

\begin{thm} \label{thm:mainstrong}
Let $R$ be as in
Theorem~\ref{thm:mainweak}.
Let $M,N$ be $R$-modules
such that $M \in \FF(R)$
and $N$ is $(S_2)$.
Assume that for every
$P \in \spec R$ such
that $\hgt (P) \geqslant 3$,
$(N_P,L_P)$ is an abundant
pair. Assume further
that for every $P \in
\spec R$ such that
$3 \leqslant \hgt (P) <d$,
we have $N_P \in \add L_P$.
Then $\chom_R(M(e),L)$
is MCM for all $e \geqslant 0$.
\end{thm}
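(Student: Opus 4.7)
The plan is to induct on $d = \dim R$ and reduce to Theorem~\ref{thm:mainweak}. If $d \leq 2$, then $\chom_R(M(e), L)$ is $(S_2)$ by the argument at the end of Corollary~\ref{cor:dbldual} (since $L$ is $(S_2)$), and $(S_2)$-ness coincides with being MCM in dimension at most $2$, so the conclusion is immediate.

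For $d \geq 3$, I would apply Theorem~\ref{thm:mainweak} to the data $(R, M, N, L)$. The abundant-pair hypothesis needed there is the case $P = \m$ of the current hypothesis (note $\hgt \m = d \geq 3$), and the $\add$-containment assumption for $3 \leq \hgt P < d$ is identical in the two theorems. What remains is the MCM condition on $\chom_R(M(e), L)_P$ for every $P$ with $3 \leq \hgt P < d$ and every $e \geq 0$, and I would establish this by applying the current theorem inductively to $R_P$. This requires checking that the hypotheses survive localization: $R_P$ is $(S_2)$ (preserved under localization) and equidimensional (using that $F$-finite rings are catenary, cf.\ Proposition~\ref{prop:cmpunc}, together with the equidimensionality of $R$); $N_P$ is $(S_2)$; $M_P \in \FF(R_P)$ with $(M(e))_P \cong M_P(e)$, which comes from flatness of $R \to R_P$ (so reflexive hull commutes with base change) together with the compatibility of $\upe{(\cdot)}$ with localization noted in Remark~\ref{rmk:localization}; and the two hypotheses $(N_Q, L_Q)$ abundant (for $\hgt Q \geq 3$) and $N_Q \in \add L_Q$ (for $3 \leq \hgt Q < \hgt P$) are exactly what the current hypotheses on $R$ provide for primes $Q \subseteq P$. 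Since $\dim R_P = \hgt P < d$, induction yields that $\chom_{R_P}(M_P(e), L_P) \cong \chom_R(M(e), L)_P$ is MCM, as needed.

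The main obstacle is the localization bookkeeping, especially confirming $(M(e))_P \cong M_P(e)$ and that the $\FF$ and abundant-pair conditions descend to $R_P$; all of this follows formally from flatness and the observation that localizing $\upe R$ at a prime $P$ yields $\upe{(R_P)}$, but these checks are the load-bearing pieces that make the induction go through. Once they are in place, Theorem~\ref{thm:mainweak} delivers the conclusion and the index-shifting identity $[M(e)](f) \cong M(e+f)$ of Corollary~\ref{cor:meplusf} ensures the statement holds for all $e$.
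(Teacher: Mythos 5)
Your proof is correct and takes essentially the same approach as the paper: both argue by induction on $d$, reduce to Theorem~\ref{thm:mainweak}, and identify the localization-compatibility $\big(F^e_R(M)^{**}\big)_P \cong F^e_{R_P}(M_P)^{**}$, i.e.\ $(M(e))_P \cong M_P(e)$, as the load-bearing step. The paper simply makes that compatibility explicit via a chain of Hom-tensor identifications over ${}^eR$ and ${}^e(R_P)$ before running the same induction.
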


\begin{proof}
First, for every
$P \in \spec R$, we
have the following.
{\allowdisplaybreaks
\begin{align*}
  \fourIdx{}{{}^e (R_P)}{}{}{[F^e_{R_P}(M_P)^{**}]}
  &= \chom_{{}^e (R_P)}
  \big(\chom_{{}^e (R_P)}
  \big(M_P \otimes_{{}^e (R_P)}
  \upe{(R_P)}, \upe{(R_P)}
  \big), \upe{(R_P)} \big) \\
  &= \chom_{{}^e (R_P)}
  \big(\chom_{{}^e (R_P)}
  \big(M \otimes_R R_P
  \otimes_{{}^e (R_P)}
  \upe{(R_P)}, \upe{(R_P)}
  \big), \upe{(R_P)} \big) \\
  &= \chom_{{}^e (R_P)}
  \big(\chom_{{}^e (R_P)}
  \big(M \otimes_R
  \upe{(R_P)}, \upe{(R_P)} \big)
  , \upe{(R_P)} \big) \\
  &= \chom_{{}^e (R_P)}
  \big(\chom_{{}^e R_{\,{}^e P}}
  \big(M \otimes_R
  \upe{R} \otimes_{{}^e R_{\,{}^e P}}
  \upe{R_{\,{}^e P}},
  \upe{R_{\,{}^e P}} \big),
  \upe{(R_P)} \big) \\
  &= \chom_{{}^e (R_P)}
  \big(\chom_{{}^e R}
  \big(M \otimes_R
  \upe{R},
  \upe{R} \big)_{\,{}^e P},
  \upe{(R_P)} \big) \\
  &= \Big( \chom_{{}^e R}
  \big(\chom_{{}^e R}
  \big(M \otimes_R
  \upe{R},
  \upe{R} \big),
  \upe{R} \big) \Big)_{\,{}^e P} \\
  &= (F^e_R(M)^{**})_P
\end{align*}}%

So we can prove by
induction on $d$ that
$\chom_R (M(e),L)$ is MCM
for all $e \geqslant 0$.
We may assume that $d
\geqslant 3$.
Let $P \in \spec{R}$ such
that $3 \leqslant \hgt(P)
< d$. By induction,
$(\chom_R(M(e),L))_P$
is MCM. So by
Theorem~\ref{thm:mainweak},
$\chom_R(M(e),L)$
is MCM.
\end{proof}

\begin{cor} \label{cor:thmweak}
Suppose that $R$ is C-M and $M \in \FF(R)$ is $(S_2)$.
Assume that:
\begin{enumerate}[label=(\alph*),align=left,leftmargin=*,nosep]
  \item either $\sdim R > 0$ and
  $M(e)_P$ is MCM for every 
  $P \in \spec R$ such that
  $3 \leqslant \hgt(P) < d$ and
  $e \geqslant 0$; or \label{Me_P}
  \item $R$ is $(\sdim_3)$. \label{sdim}
\end{enumerate}
Then $M$ is MCM.
\end{cor}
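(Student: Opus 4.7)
The plan is to reduce the conclusion to the assertion that $M^{\vee} = \chom_R(M,\omega_R)$ is MCM and then appeal to Corollary~\ref{cor:dblcech}. Because $R$ is $F$-finite and C-M, it has a canonical module $\omega_R$, which is itself MCM (hence $(S_2)$). When $d \leqslant 2$, any $(S_2)$-module over a C-M ring is MCM, so we may assume $d \geqslant 3$; in case~\ref{sdim}, the hypothesis $(\sdim_3)$ applied to $P = \m$ already gives $\sdim R > 0$, so this inequality holds in both cases.

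The technical core is to show that $(\omega_R,\omega_R)$ is an abundant pair, and, for case~\ref{sdim}, that $(\omega_{R_P},\omega_{R_P})$ is an abundant pair over $R_P$ for every $P$ with $\hgt P \geqslant 3$. Since $R \to \upe R$ is a module-finite extension between $F$-finite C-M rings, Grothendieck duality (or the standard formula for the canonical module of a finite extension) yields
\[
  \upe{\omega_R} \;\cong\; \chom_R(\upe R,\omega_R).
\]
Applying $\chom_R(-,\omega_R)$ to $\upe R = R^{\oplus a_e} \oplus R_e$ exhibits $\omega_R^{\oplus a_e}$ as a direct summand of $\upe{\omega_R}$; hence if $b_e$ is the maximal multiplicity of $\omega_R$ in $\upe{\omega_R}$, then $b_e \geqslant a_e$. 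The hypothesis $\sdim R > 0$ forces $a_e / p^{e(1+\alpha(R))}$ to be bounded below by a positive constant, so $a_e/p^{e\alpha(R)} \to \infty$ and thus $\liminf p^{e\alpha(R)}/b_e = 0$, which is exactly the definition of $(\omega_R,\omega_R)$ being an abundant pair. The same reasoning after localization handles the version needed for Theorem~\ref{thm:mainstrong}.

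With abundance in hand, we apply Theorem~\ref{thm:mainweak} in case~\ref{Me_P} and Theorem~\ref{thm:mainstrong} in case~\ref{sdim}, each with $N = L = \omega_R$. The inclusion $N_P \in \add L_P$ is automatic, and in case~\ref{Me_P} the remaining requirement that $(\chom_R(M(e),\omega_R))_P = \chom_{R_P}(M(e)_P,\omega_{R_P})$ be MCM for $3 \leqslant \hgt P < d$ follows from the assumption that $M(e)_P$ is MCM over the C-M ring $R_P$. Either theorem then produces $M(e)^{\vee}$ MCM for every $e \geqslant 0$; taking $e = 0$ and identifying $M$ with $M(0) = M^{**}$ via Lemma~\ref{lem:isocodim1} (so that $M^{\vee} = M(0)^{\vee}$ by Corollary~\ref{cor:dbldual}), we conclude that $M^{\vee}$ is MCM. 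Corollary~\ref{cor:dblcech} now gives that $M$ is MCM. The one genuinely delicate step is the Grothendieck-duality computation underlying abundance of $(\omega_R,\omega_R)$; everything else is essentially bookkeeping against Theorems~\ref{thm:mainweak} and~\ref{thm:mainstrong}.
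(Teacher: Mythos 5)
Your proof follows exactly the paper's own route: reduce to showing $M^{\vee}=\chom_R(M,\omega_R)$ is MCM, establish that $(\omega_R,\omega_R)$ is an abundant pair via $\chom_R(\upe R,\omega_R)=\omega_R^{\oplus a_e}\oplus\chom_R(R_e,\omega_R)$, feed this into Theorem~\ref{thm:mainweak} (case~\ref{Me_P}) or Theorem~\ref{thm:mainstrong} (case~\ref{sdim}) with $N=L=\omega_R$, and then pull back through Corollary~\ref{cor:dblcech}. You are a bit more explicit than the paper about the two small points it leaves implicit — that $(\sdim_3)$ at $P=\m$ already gives $\sdim R>0$ once $d\geqslant 3$, and the asymptotic $b_e\geqslant a_e$ computation behind abundance — but the argument is the same.
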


\begin{proof}
Since $R$ is a homomorphic
image of a Gorenstein ring,
it has a canonical module
$\omega_R$ by \cite[12.1.3(iii)]{lc}.
Since $R$ is C-M, it is
equidimensional, and $\omega_R$
is reflexive and hence $(S_2)$.
We will show that
$\chom_R (M,\omega_R)
= M^{\vee}$ is MCM, so that
by Corollary~\ref{cor:dblcech},
$M$ is MCM.
We may assume that $d > 2$.
Since $\sdim R > 0$, $(R,R)$
is an abundant pair by
Example~\ref{eg:abdtpair}.
Then $(\omega_R,
\omega_R)$ is also an abundant
pair, since $\omega_{{}^e R}
\cong \chom_R(\upe{R},
\omega_R) = \chom_R(R^{\oplus a_e}
\oplus R_e, \omega_R)
= \omega_R^{\oplus a_e}
\oplus \chom_R(R_e,
\omega_R)$. If \ref{Me_P} holds, then
$(\chom_R(M(e),\omega_R))_P$
is MCM for every 
$P \in \spec R$ such that
$3 \leqslant \hgt(P) < d$ and
$e \geqslant 0$.
By Theorem~\ref{thm:mainweak},
$\chom_R (M,\omega_R)
= M^{\vee}$ is MCM.
If \ref{sdim} holds, then
$(\omega_{R_P},\omega_{R_P})$
is an abundant pair for
$P \in \spec R$ such that
$\hgt(P) \geqslant 3$, so
$\chom_R (M,\omega_R)$ is MCM
by Theorem~\ref{thm:mainstrong}.
\end{proof}

\begin{cor} \label{cor:idealmcm}
Suppose that $R$ is strongly $F$-regular and
$M \in \FF(R)$ is $(S_2)$.
Then $M$ is MCM. In particular, if
$I$ is a reflexive ideal
such that $[I]$ is torsion
in $\Cl(R)$, then $I$ is MCM.
\end{cor}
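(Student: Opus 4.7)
The plan is to derive this as a direct consequence of Corollary~\ref{cor:thmweak} part~\ref{sdim}. The key observations are that strong $F$-regularity is preserved by localization and that it forces the splitting dimension to be maximal, so it gives us $(\sdim_n)$ for every $n \geqslant 1$ essentially for free.

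First I would record that a strongly $F$-regular ring is Cohen--Macaulay and normal (hence in particular $(S_2)$ and equidimensional), so the hypotheses of Corollary~\ref{cor:thmweak} on $R$ are satisfied. Next, for any $P \in \spec R$, the ring $R_P$ is again strongly $F$-regular; this implies that $\sdim R_P = \dim R_P = \hgt P$, which is $>0$ whenever $P$ is not a minimal prime. Consequently $R$ satisfies $(\sdim_n)$ for every $n \geqslant 1$, and in particular $(\sdim_3)$. Applying Corollary~\ref{cor:thmweak}\ref{sdim} to the given $M$ then yields that $M$ is MCM.

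For the second assertion, let $I$ be a reflexive ideal whose class $[I] \in \Cl(R)$ is torsion. Since $R$ is normal, $I$ is reflexive and hence $(S_2)$, and since $I \subseteq R$ we have $\Supp(I) = \Spec R$ and $I$ is locally free in codimension one. It remains to see that $I \in \FF(R)$. By Corollary~\ref{cor:meplusf}, the operation $e \mapsto I(e)$ corresponds on the level of divisor classes to multiplication by $p^e$; if $[I]$ has finite order $n$, then the sequence $\{p^e [I] \bmod n\}_{e \geqslant 0}$ takes only finitely many values, so $\{I(e)\}_{e \geqslant 0}$ consists of only finitely many isomorphism classes of reflexive rank-one modules. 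Thus $I \in \FF(R)$, and the first part of the corollary delivers that $I$ is MCM.

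The only point that requires any care is verifying that strong $F$-regularity passes to localizations (standard, but should be invoked explicitly) and that $\sdim R_P = \dim R_P$ for such a localization; the remainder is a clean application of Corollary~\ref{cor:thmweak}. I do not anticipate any real obstacle.
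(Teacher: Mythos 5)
Your proof is essentially the same as the paper's and is correct in substance: you invoke $(\sdim_3)$ for a strongly $F$-regular ring (via localization and $\sdim R_P = \hgt P$) and then appeal to Corollary~\ref{cor:thmweak}\ref{sdim}; you just spell out the intermediate steps that the paper leaves implicit.

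One citation should be fixed in your treatment of the second assertion. The fact that $I(e)$ represents $p^e\,[I]$ in $\Cl(R)$ --- equivalently, that $I(e) \cong I^{(p^e)}$, the reflexive hull of the $p^e$th Frobenius power --- is not a consequence of Corollary~\ref{cor:meplusf}. That corollary only gives the composition law $[M(e)](f) \cong M(e+f)$; it does not by itself identify $I(1)$ with $I^{(p)}$, which is the content you actually need. The correct reference is Example~\ref{eg:modft}(a) (Watanabe's observation, \cite[paragraph~2.3]{Wa}), which is exactly what the paper cites. With that substitution your argument matches the paper's exactly.
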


\begin{proof}
Since $R$ is strongly
$F$-regular, it is $(\sdim_3)$.
Corollary~\ref{cor:thmweak}
shows that $M$ is MCM.
By Example~\ref{eg:modft}, we have
$I \in \FF(R)$.
\end{proof}

\begin{rmk}
Corollary~\ref{cor:idealmcm}
generalizes \cite[Corollary~3.3]{PS}.
\end{rmk}

\section{The category of finite \texorpdfstring{$F$}{F}-type
modules} \label{sec:ft}
In this section we study the category of finite $F$-type in more detail. We completely classify this category when $R$ is a quotient singularity with a finite group whose order is coprime to $p$ (Corollary \ref{quotient}), or when $R$ is a complete intersection that is regular in codimension $2$ (Theorem \ref{thm:cift}).

\begin{eg} \label{eg:modft}
The following are $R$-modules
$M$ of finite $F$-type.\\
(a) (\cite[paragraph~2.3]{Wa})
$R$ is a normal
domain and $M = I$, where
$I$ is a divisorial ideal.
Then $M(e) \cong I^{(e)}$,
so $M \in \FF(R)$ iff
$[I]$ is torsion
in $\Cl (R)$.\\
(a') $M$ is a free $R$-module.\\
(b) (\cite[Theorem~2.7]{Wa})
Let $R
\to S$ be a
finite homomorphism
of normal domains which is \'{e}tale in
codimension $1$ and $M=S$.
From the natural map
$S \otimes_R \upe{R} \to
\upe{S}$ we get
$(S \otimes_R \upe{R})^{**}
\cong S^{**}$, i.e.\ $M(e)
\cong S$.\\
(c) See also Lemma~\ref{lem:fperiodic}.
\end{eg}

\begin{lemma}
$\FF(R)$ is closed under direct
sums and direct summands.
\end{lemma}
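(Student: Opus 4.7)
The plan is to derive both closure properties from a single observation: for any $R$-modules $M_1, M_2$ (each satisfying the support/codimension-one hypotheses of Definition~\ref{defn:intro}) and any $e \geqslant 0$,
\[
  (M_1 \oplus M_2)(e) \;\cong\; M_1(e) \oplus M_2(e).
\]
This follows because the Peskine--Szpiro functor $F^e = -\otimes_R \upe{R}$ is additive, and the double-dual functor $(-)^{**}$ commutes with finite direct sums (since $\chom_R(-,R)$ does). Once this is established, the lemma becomes a bookkeeping check of the three defining conditions.

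For closure under direct sums, suppose $M_i \in \FF(R)$ with $\{M_i(e)\}_{e \geqslant 0} \subseteq \add_R(X_i)$ for $i = 1, 2$. Then $\Supp(M_1 \oplus M_2) = \Supp(M_1) \cup \Supp(M_2) = \Spec R$, and localizing at a prime of height $\leqslant 1$ yields a direct sum of two free modules, so $M_1 \oplus M_2$ is locally free in codimension one. By the displayed isomorphism, $\{(M_1 \oplus M_2)(e)\}_{e \geqslant 0} \subseteq \add_R(X_1 \oplus X_2)$, so $M_1 \oplus M_2 \in \FF(R)$.

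For closure under direct summands, write $M = N \oplus P$ with $M \in \FF(R)$ witnessed by $X$, and assume $N$ satisfies the support hypothesis (otherwise $N$ is not even a candidate for $\FF(R)$). At a prime $Q$ with $\hgt Q \leqslant 1$, $N_Q$ is a direct summand of the free $R_Q$-module $M_Q$; since $R_Q$ is local, $N_Q$ is therefore free. By the displayed isomorphism, $N(e)$ is a direct summand of $M(e) \in \add_R(X)$, and $\add_R(X)$ is closed under direct summands by its very definition, so $N(e) \in \add_R(X)$ for every $e \geqslant 0$.

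There is no serious obstacle here: the entire argument rests on additivity of $F^e$ and $(-)^{**}$, together with the fact that projective modules over a local ring are free. The only mild subtlety is conceptual, namely the convention that the closure-under-summands statement implicitly restricts to summands satisfying the preliminary support condition built into Definition~\ref{defn:intro}.
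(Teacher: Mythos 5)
Your proof is correct and fleshes out exactly the routine verification the paper dismisses with a one-word ``Obvious'' --- additivity of $F^e$ and $(-)^{**}$, plus the fact that $\add_R(X)$ is closed under summands by construction. You are also right to flag the implicit convention that a direct summand is only asked to lie in $\FF(R)$ when it already satisfies the support and codimension-one hypotheses of Definition~\ref{defn:intro}; the paper leaves this unstated.
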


\begin{proof}
Obvious.
\end{proof}

\begin{lemma} \label{lem:addx}
Let $S \subseteq \modcat R$.
Then $\add_R(S)$ has finitely many
indecomposable objects iff
$S \subseteq \add_R(X)$
for some $R$-module $X$.
Hence for an $R$-module $M$,
$M \in \FF(R)$ if and only if  only finitely
many indecomposable direct
summands appear among $\{M(e)
\}_{e \geqslant 0}$.
\end{lemma}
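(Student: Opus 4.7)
The plan is to prove the two directions of the biconditional separately, making essential use of the Krull--Schmidt theorem, which applies since $R$ is a (Noetherian) local ring and all modules are finitely generated.

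For the ``$\Rightarrow$'' direction, suppose $\add_R(S)$ has only finitely many indecomposable objects (up to isomorphism); call representatives $X_1, \dots, X_n$, and set $X = X_1 \oplus \cdots \oplus X_n$. Any $M \in S \subseteq \add_R(S)$ decomposes by Krull--Schmidt as a finite direct sum of copies of the $X_i$, hence is a direct summand of some $X^{\oplus m}$, so $M \in \add_R(X)$. This shows $S \subseteq \add_R(X)$.

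For the ``$\Leftarrow$'' direction, suppose $S \subseteq \add_R(X)$. Then $\add_R(S) \subseteq \add_R(X)$, so every indecomposable object of $\add_R(S)$ is an indecomposable direct summand of some $X^{\oplus m}$. By Krull--Schmidt, any such indecomposable is (up to isomorphism) one of the indecomposable summands of $X$ itself; since $X$ is finitely generated, there are only finitely many of these. Thus $\add_R(S)$ has finitely many indecomposables.

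For the ``Hence'' clause, apply the equivalence to $S = \{M(e)\}_{e \geqslant 0}$: the defining condition of $M \in \FF(R)$ is that $S \subseteq \add_R(X)$ for some $X$, which by the first part is equivalent to $\add_R(\{M(e)\}_{e \geqslant 0})$ having only finitely many indecomposables, i.e., only finitely many isomorphism classes of indecomposable direct summands appearing in the family $\{M(e)\}_{e \geqslant 0}$.

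The only subtlety worth flagging is the tacit appeal to Krull--Schmidt; this is what guarantees both that the finite list $X_1, \dots, X_n$ suffices to capture all of $\add_R(S)$ and that indecomposable summands of powers of $X$ are actually summands of $X$. Once this is invoked, the argument is purely formal.
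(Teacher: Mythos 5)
Your ``$\Rightarrow$'' direction (finitely many indecomposables implies $S\subseteq\add_R(X)$) matches the paper and is sound; taking $X$ to be the direct sum of the finitely many indecomposables requires only the existence of a decomposition into indecomposables, not its uniqueness. The ``$\Leftarrow$'' direction, however, has a genuine gap: you invoke the Krull--Schmidt theorem, but $R$ here is only a Noetherian local ring, not complete or Henselian, and Krull--Schmidt can fail for finitely generated modules over such rings (indecomposable finitely generated modules need not have local endomorphism rings; see, e.g., the phenomena surveyed by Baeth--Wiegand, which the paper itself cites). In particular, an indecomposable direct summand of $X^{\oplus m}$ need not be isomorphic to a summand of $X$, so your claim that ``there are only finitely many of these'' does not follow.

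The paper works around exactly this obstruction: it forms the endomorphism ring $E=\cend_R(X)$, uses the equivalence $\add_R(X)\simeq \Proj E$ via $L\mapsto\chom_R(L,X)$, observes that $E$ is module-finite over the local ring $R$ and hence semilocal, and then appeals to a theorem of Fuller and Shutters asserting that a semilocal ring has only finitely many isomorphism classes of indecomposable finitely generated projective modules. That route does not rely on Krull--Schmidt holding in $\modcat(R)$. To repair your argument you would either need to assume $R$ complete (or Henselian), or replace the Krull--Schmidt step with an argument along the lines of the paper's.
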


\begin{proof}
For the ``only if'' part, take
$X$ to be the direct sum
of the indecomposable objects
of $\add_R(S)$. For the
``if'' part, we consider
the endomorphism ring $E =
\cend_R(X)$. Consider the
category $\Proj E \subseteq
\modcat (E)$ of left projective
modules over $E$. Then $F \colon
\add_R(X) \to \Proj E$ given
by $F(L) = \chom_R(L,X)$ for
$L \in \add_R(X)$ is an
equivalence of categories.
Since $E$ is finitely
generated over the local ring $R$,
it is semilocal.
By \cite[Theorem~9]{FS},
$\Proj E$ has only finitely
many isomorphism classes of
indecomposable objects,
and hence so does $\add_R(X)$.
\end{proof}

\begin{cor} \label{cor:periodicity}
Let $R,M$ be as in
Corollary~\ref{cor:meplusf}. Then
$M \in \FF(R)$ iff there are
$e \geqslant 0$ and $f>0$
such that $M(e) \cong M(e+f)$.
\end{cor}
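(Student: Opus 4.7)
The plan is to prove the two directions separately: ``$\Leftarrow$'' follows cleanly from the index-shifting identity (Corollary~\ref{cor:meplusf}), while ``$\Rightarrow$'' requires a generic-rank argument to bound multiplicities in a Krull--Schmidt decomposition.

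For ($\Leftarrow$), suppose $M(e) \cong M(e+f)$ with $e \geqslant 0$ and $f > 0$. The operation $(-)(g) = F^g_R(-)^{**}$ sends isomorphic modules to isomorphic modules, so for every $g \geqslant 0$ we have $[M(e)](g) \cong [M(e+f)](g)$. Combining with Corollary~\ref{cor:meplusf}, which identifies these with $M(e+g)$ and $M(e+f+g)$ respectively, we get $M(n) \cong M(n+f)$ for all $n \geqslant e$. Hence $\{M(n)\}_{n \geqslant 0}$ realises at most $e+f$ isomorphism classes, so it is contained in $\add_R \bigl( \bigoplus_{i=0}^{e+f-1} M(i) \bigr)$, proving $M \in \FF(R)$.

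For ($\Rightarrow$), suppose $M \in \FF(R)$. By Lemma~\ref{lem:addx}, only finitely many indecomposable modules $X_1, \dots, X_n$ appear as direct summands of the $M(e)$, and Krull--Schmidt writes $M(e) \cong \bigoplus_i X_i^{a_i(e)}$ uniquely. The crux is bounding the multiplicities $a_i(e)$ uniformly in $e$. Since $R$ is reduced, $R_\p$ is a field at every minimal prime $\p$; unwinding the definition of $M(e)$---using that the reflexive hull is the identity over a field and that $\upe{R}$ equals $R$ as a ring---shows $M(e)_\p \cong M_\p$ as $R_\p$-modules. Hence $M \mapsto M(e)$ preserves generic rank at every minimal prime of $R$.

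Each nonzero $X_i$ is reflexive as a summand of the reflexive module $M(e)$. A torsion reflexive module $N$ over a reduced ring satisfies $N^* = 0$ and hence $N \cong N^{**} = 0$, so each $X_i \neq 0$ has positive rank at some minimal prime $\p_{j_i}$ of $R$. Additivity of rank at $\p_{j_i}$ forces
\[
  a_i(e) \leqslant \frac{\rk_{R_{\p_{j_i}}} M_{\p_{j_i}}}{\rk_{R_{\p_{j_i}}} (X_i)_{\p_{j_i}}},
\]
a bound independent of $e$. Consequently $\{M(e)\}_{e \geqslant 0}$ represents only finitely many isomorphism classes, and pigeonhole yields $e \geqslant 0$ and $f > 0$ with $M(e) \cong M(e+f)$. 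The main obstacle is the rank-preservation step: one must carefully track the convention that $M(e)$ is viewed as an $R$-module via the ring identification $\upe{R} = R$, and verify that localization at a minimal prime interacts correctly with both the Frobenius pullback and the reflexive hull.
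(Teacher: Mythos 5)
Your proof is correct, and the ``if'' direction is exactly the paper's argument: apply $(-)(g)$ to $M(e)\cong M(e+f)$ and use Corollary~\ref{cor:meplusf} to conclude eventual periodicity.

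The ``only if'' direction, however, takes a genuinely different and arguably cleaner route. The paper passes to the finite set $S$ of indecomposables, then runs an induction on the minimal number of generators of $N\in S$, splitting into the case where every $N(e)$ is indecomposable (pigeonhole in $S$) and the case where some $N(e_0)$ decomposes (handle the summands inductively, then index-shift back). You instead bound the Krull--Schmidt multiplicities directly: localize at a minimal prime $\p$, observe that over the field $R_\p$ both $-\otimes_R\upe{R}$ and $(-)^{**}$ preserve dimension so that $\rk_\p M(e)=\rk_\p M$ is constant, and note that every nonzero reflexive summand $X_i$ is not torsion (since $X_i^*$ would be torsion yet embed in a free module, forcing $X_i=X_i^{**}=0$) and hence has positive rank at some minimal prime. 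This gives a uniform bound on each multiplicity, hence finitely many isomorphism classes among $\{M(e)\}$, hence a repetition. This avoids the paper's induction entirely and makes the boundedness of multiplicities --- the real content of the direction --- completely explicit, which I think is a gain in transparency.

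One small inaccuracy worth fixing: you invoke ``Krull--Schmidt'' to assert uniqueness of the decomposition $M(e)\cong\bigoplus_i X_i^{a_i(e)}$, but over a non-complete noetherian local ring this uniqueness can fail. Fortunately your argument does not need it: for each $e$ fix \emph{some} decomposition into the finitely many indecomposables of $\add_R(X)$ (which exists since $\add_R(X)$ is closed under summands and these are the only indecomposables it contains, by Lemma~\ref{lem:addx}); the rank bound then caps the resulting exponent vector, so $e\mapsto M(e)$ factors through a finite set of vectors and therefore hits only finitely many isomorphism classes. Replacing ``Krull--Schmidt writes \dots\ uniquely'' with ``choose any decomposition'' makes the argument airtight.
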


\begin{proof}
``If'': We note that
$M(e+g) \cong [M(e)](g)
\cong [M(e+f)](g) \cong
M(e+f+g)$ by
Corollary~\ref{cor:meplusf},
so there are only finitely
many isomorphism classes
of $\{M(e)\}_{e \geqslant 0}$.\\
``Only if'': Let $X$ be
an $R$-module such that
$\{M(e)\}_{e \geqslant 0} \subseteq
\add_R(X)$. Let $S$ be the set of
indecomposable direct summands
of $\{M(e)\}_{e \geqslant 0}$.
Then $S \subseteq \add_R(X)$,
and $S$ is finite by
Lemma~\ref{lem:addx}. By index
shifting, it suffices
to prove that $N(e) \cong N(e+f)$ for
some $e \geqslant 0$ and
$f > 0$ for all $N \in S$.
By assumption, $R$ is local, so we can prove
the claim by induction on the minimum number
of generators of $N$. Suppose first that
$N(e)$ is indecomposable for all $e$.
Then the claim holds since $S$ is finite.
So suppose that $N(e_0)$ is
not indecomposable for some
$e_0$ and has indecomposable
direct summands $N_i \in S$.
By induction, the claim holds
for the $N_i$, and hence for
$N$ by index shifting.
\end{proof}

\begin{prop} \label{prop:closureft}
Let $R$ be $(S_2)$.
Let $\FF=\FF(R)$. Then:\\
(a) if $M \in \FF$, then
$M^{**} \in \FF$.\\
(b) if $M,N \in \FF$,
then $M \otimes N \in \FF$.
\end{prop}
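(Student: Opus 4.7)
The plan for (a) is to prove the stronger statement that $(M^{**})(e) \cong M(e)$ for every $e \geqslant 0$; once established, the collection $\{(M^{**})(e)\}_{e \geqslant 0}$ coincides with $\{M(e)\}_{e \geqslant 0}$, which by hypothesis lies in $\add_R(X)$, so $M^{**} \in \FF(R)$. To begin, note that $M^{**}$ has the same support as $M$ and agrees with $M$ in codimension $1$ (since $M$ is locally free there), so it satisfies the hypotheses of Definition~\ref{defn:intro} and $(M^{**})(e)$ is defined. The canonical map $M \to M^{**}$ is an isomorphism in codimension $1$, a property preserved by $F^e$ and by $(-)^{**}$, giving a natural map $M(e) \to (M^{**})(e)$ that is an isomorphism in codimension $1$. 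Both sides are reflexive hulls and, since $R$ is $(S_2)$, are $(S_2)$ modules by the argument used in the proof of Corollary~\ref{cor:dbldual}. Lemma~\ref{lem:isocodim1} then upgrades the codimension-$1$ isomorphism to a global one.

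For (b), the first step is to check that $M \otimes_R N$ meets the hypotheses of Definition~\ref{defn:intro}: its support equals $\Supp(M) \cap \Supp(N) = \Spec R$, and in codimension $1$ it is the tensor product of two locally free modules, hence locally free. The crux is the natural isomorphism
\[
  (M \otimes_R N)(e) \cong (M(e) \otimes_R N(e))^{**},
\]
which I would establish by the same codimension-$1$ strategy. Using the balancing identity $F^e(M \otimes_R N) \cong F^e(M) \otimes_{\upe{R}} F^e(N)$ together with the canonical maps $F^e(M) \to M(e)$ and $F^e(N) \to N(e)$, one obtains a natural map $F^e(M \otimes_R N) \to M(e) \otimes_{\upe{R}} N(e)$; applying $(-)^{**}$ yields a homomorphism $(M \otimes_R N)(e) \to (M(e) \otimes_R N(e))^{**}$, where after the identification $\upe{R} = R$ the tensor $\otimes_{\upe{R}}$ becomes $\otimes_R$. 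In codimension $1$ both sides are locally free of the same rank $r_M r_N$, and both are $(S_2)$ as reflexive hulls over the $(S_2)$ ring $R$, so Lemma~\ref{lem:isocodim1} finishes the identification.

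With this identification in hand, the finite $F$-type conclusion is a bookkeeping step. Choose $X, Y$ so that $\{M(e)\}_{e \geqslant 0} \subseteq \add_R(X)$ and $\{N(e)\}_{e \geqslant 0} \subseteq \add_R(Y)$. Each $M(e)$ is a direct summand of some $X^{n_e}$ and each $N(e)$ of some $Y^{m_e}$, so $M(e) \otimes_R N(e)$ is a direct summand of $(X \otimes_R Y)^{n_e m_e}$. Since $(-)^{**}$ commutes with finite direct sums and preserves summands, $(M \otimes_R N)(e) \cong (M(e) \otimes_R N(e))^{**}$ lies in $\add_R\bigl((X \otimes_R Y)^{**}\bigr)$, proving $M \otimes_R N \in \FF(R)$.

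The main obstacle I anticipate is the Frobenius bookkeeping in (b): the critical observation is to identify $F^e(M \otimes_R N)$ with the \emph{balanced} tensor $F^e(M) \otimes_{\upe{R}} F^e(N)$ rather than the unbalanced $F^e(M) \otimes_R F^e(N)$, so that applying $(-)^{**}$ lands on the correct object $(M(e) \otimes_R N(e))^{**}$. Once this identification is arranged, the remainder of both parts is a direct application of the codimension-$1$/$(S_2)$ machinery packaged in Lemma~\ref{lem:isocodim1} and Corollary~\ref{cor:dbldual}.
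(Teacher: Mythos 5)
Your proof is correct and follows essentially the same route as the paper: for (b) you establish the same key isomorphism $(M \otimes_R N)(e) \cong (M(e) \otimes_R N(e))^{**}$ via the balanced tensor $F^e(M)\otimes_{\upe{R}}F^e(N)$ and the codimension-$1$/$(S_2)$ machinery of Lemma~\ref{lem:isocodim1}, and then package the result exactly as the paper does (the paper factors through Lemma~\ref{lem:addx} to list indecomposables, while you take $(X\otimes_R Y)^{**}$ directly, but these are equivalent). For (a) you re-derive $(M^{**})(e) \cong M(e)$ from scratch, whereas the paper obtains it in one line by applying Corollary~\ref{cor:meplusf} (index shifting) with $e=0$; your inline argument is sound but duplicates what that corollary already provides.
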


\begin{proof}
(a) By Corollary~\ref{cor:meplusf},
we have $[M(0)](e) \cong
M(e)$, that is, $(M^{**})(e) \cong
M(e)$.\\
(b) For $P \in \spec R$,
we have $(M \otimes_R N)_P
= M_P \otimes_{R_P} N_P$,
so $M \otimes N$ is locally
free in codimension 1. Next,
we have
\begin{align*}
  M \otimes_R N \otimes_R \upe{R}
  &= M \otimes_R N \otimes_R
  \upe{R} \otimes_{{}^e R}
  \upe{R}\\
  &= M \otimes_R \upe{R}
  \otimes_{{}^e R} \upe{R}
  \otimes_R N\\
  &= (M \otimes_R \upe{R})
  \otimes_{{}^e R} (N \otimes_R
  \upe{R})\\
  (M \otimes_R N \otimes_R
  \upe{R})^{**}
  &= [(M \otimes_R \upe{R})
  \otimes_{{}^e R} (N \otimes_R
  \upe{R})]^{**}
\end{align*}
The natural maps $M \otimes_R
\upe{R} \to (M \otimes_R
\upe{R})^{**}$ and
$N \otimes_R
\upe{R} \to (N \otimes_R
\upe{R})^{**}$ give rise
to the map $f \colon
[(M \otimes_R \upe{R})
\otimes_{{}^e R} (N \otimes_R
\upe{R})]^{**} \to
[(M \otimes_R \upe{R})^{**}
\otimes_{{}^e R} (N \otimes_R
\upe{R})^{**}]^{**}$. Since
$R$ is $(S_2)$,
Lemma~\ref{lem:isocodim1} shows
that $f$ is an isomorphism.
So we have an isomorphism
\[
  (M \otimes_R N)(e)
  \cong
  (M(e) \otimes_R N(e)
  )^{**}
\]
By Lemma~\ref{lem:addx}, only
finitely many indecomposable
direct summands
$\{K_i\}_{i=1}^m$ appear in
$\{M(e)\}_{e \geqslant 0}$
and $\{L_j\}_{j=1}^n$ in
$\{N(e)\}_{e \geqslant 0}$.
Let $X = \sum_{i,j}
(K_i \otimes_R L_j)^{**}$.
Then $\{(M \otimes_R N)(e)
\}_{e \geqslant 0} \subseteq
\add_R(X)$.
\end{proof}

\begin{question}
If $R$ is Gorenstein,
is $M^* \in \FF(R)$?
\end{question}

\begin{lemma} \label{lem:basechangeft}
Let $f \colon R \to S$ be
a ring homomorphism.
Suppose that $S$ is $(S_2)$. Let
$M$ be an $R$-module.
Suppose that:\\
(a) $f$ is flat; or\\
(b) $M_P$ is free
for every $P = f^{-1}(Q)$
such that $Q \in \spec S$
and $\hgt (Q) = 1$.\\
If $M \in \FF(R)$, then
$M \otimes_R S \in \FF(S)$.
\end{lemma}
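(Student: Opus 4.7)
The plan is to deduce the lemma from Lemma~\ref{lem:dbldualringext} applied to the composite
\[
  R \xrightarrow{F^e_R} \upe{R} \xrightarrow{\;f\;} \upe{S}
\]
(one application for each $e \geqslant 0$) with the module $M$. Writing $\upe{S}$ as an $R$-module via this composite, we have $(M \otimes_R S) \otimes_S \upe{S} = M \otimes_R \upe{S}$, so by definition $(M \otimes_R S)(e) = (M \otimes_R \upe{S})^{**_S}$. In the notation of Lemma~\ref{lem:dbldualringext}, $M_1 = (M \otimes_R \upe{R})^{**_{\upe R}} = M(e)$ and $M_2 = (M \otimes_R \upe{S})^{**_S} = (M \otimes_R S)(e)$, so the conclusion $(M_1 \otimes_{R_1} R_2)^{**} \cong M_2$ of that lemma, after identifying $\upe R = R$ and $\upe S = S$, becomes the key isomorphism
\[
  (M(e) \otimes_R S)^{**_S} \;\cong\; (M \otimes_R S)(e).
\]

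The core of the argument is verifying the hypotheses of Lemma~\ref{lem:dbldualringext}. The target ring $R_2 = \upe{S}$ is $(S_2)$ because it is the same ring as $S$. Freeness of $M_P$ at height-$1$ primes $P$ of $R$ is built into the definition of $\FF(R)$. The remaining requirement is freeness of $M_P$ at $P = f^{-1}(Q)$ for $Q \in \Spec \upe{S}$ of height $1$; since primes are radical, these $P$'s are exactly the preimages under $f$ of height-$1$ primes of $S$. In case~(b) this is the hypothesis. In case~(a), flatness of $f$ together with going-down gives $\hgt P \leqslant \hgt Q = 1$; if $\hgt P = 1$ we use the codimension-$1$ freeness of $M$, while if $\hgt P = 0$ then $P$ is a minimal prime of the reduced ring $R$, so $R_P$ is a field and every $R_P$-module is free. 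This dichotomy is the only real obstacle of the proof, and it is exactly what cases (a) and (b) are tailored to supply.

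With the isomorphism in hand, the finite-$F$-type conclusion is formal. Pick $X$ with $\{M(e)\}_{e \geqslant 0} \subseteq \add_R(X)$. Both $-\otimes_R S$ and $(-)^{**_S}$ preserve direct sums and direct summands, so
\[
  \{(M \otimes_R S)(e)\}_{e \geqslant 0} \;\subseteq\; \add_S\!\bigl((X \otimes_R S)^{**_S}\bigr).
\]
The remaining preconditions for membership in $\FF(S)$—that $M \otimes_R S$ has full support on $\Spec S$ and is locally free in codimension $1$—reduce to the same freeness information used above (together with faithful flatness of local flat maps in case~(a)), giving $M \otimes_R S \in \FF(S)$.
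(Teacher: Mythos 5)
Your argument is correct and follows the paper's proof essentially line by line: both apply Lemma~\ref{lem:dbldualringext} to the composite $R \to \upe{R} \to \upe{S}$ to obtain $(M \otimes_R S)(e) \cong (M(e) \otimes_R S)^{**}$, then conclude via $\add$-closure under $- \otimes_R S$ and reflexive hull. The only (cosmetic) difference is in case~(a): the paper reduces to case~(b) via the dimension equality $\hgt Q = \hgt P + \dim(S_Q/PS_Q)$ for flat local maps, while you use going-down together with the observation that for minimal $P$ the localization $R_P$ is a field since $R$ is reduced; both deliver the same freeness in codimension $\leqslant 1$.
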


\begin{proof}
Let $N = M \otimes_R S$.
First, we consider (b)
and suppose that $Q
\in \spec S$, $\hgt(Q)=1$
and $M \in \FF(R)$.
Let $P = f^{-1}(Q)$.
Then $N_Q
= M_P \otimes_{R_P} S_Q$,
which is free over $S_Q$
by assumption. Next, we
consider the modules
$F^e_S(N)^{**}
= (M \otimes_R S \otimes_S
\upe{S})^{**}
= (M \otimes_R \upe{S})^{**}
= (M \otimes_R \upe{R}
\otimes_{{}^e R} \upe{S})^{**}$.

\[
  \xymatrix{
  R \ar[r]^f \ar[d]_{\phi}
  & S \ar[d]^{\phi}\\
  \upe{R} \ar[r]^f
  & \upe{S}
  }
\]
Consider the maps
$R \xrightarrow{\phi} \upe{R}
\xrightarrow{f} \upe{S}$,
where $\phi$ is the
Frobenius map,
and let $R_1 = \upe{R}$,
$R_2 = \upe{S}$. Then
Lemma~\ref{lem:dbldualringext} shows
that $F^e_S(N)^{**}
= (M \otimes_R \upe{S})^{**}
\cong (F^e_R(M)^{**}
\otimes_{{}^e R} \upe{S})^{**}$,
so $N(e) \cong
(M(e) \otimes_R S)^{**}
\subseteq \add_S(
(X \otimes_R S)^{**})$.\\
Now suppose that $f$ is flat.
Let $Q$, $P$ and $M$ be as
above. Then we have
$\hgt(Q) = \hgt(P) + \dim
(S_Q/PS_Q)$, so $\hgt(P)
\leqslant 1$ and $M_P$
is free, giving (b).
\end{proof}

\begin{lemma} \label{lem:mstar}
Suppose that $R$ is regular.
Let $M$ be an $R$-module.
Then $M^*$ is reflexive.
\end{lemma}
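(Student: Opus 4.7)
The plan is to apply Lemma~\ref{lem:isocodim1} to the natural biduality map $M^{*} \to M^{***}$: once we verify that the source is $(S_{2})$, the target is $(S_{1})$, and the map is an isomorphism in codimension $1$, we can conclude that $M^{*}$ is reflexive.

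First, I will establish the $(S_{2})$ conditions. Choose a finite free presentation $F_{1} \to F_{0} \to M \to 0$ and dualize to obtain the left-exact sequence
\[
  0 \to M^{*} \to F_{0}^{*} \to F_{1}^{*}.
\]
Since $R$ is regular, hence $(S_{2})$, the free modules $F_{i}^{*}$ are $(S_{2})$, and the argument already given in the proof of Corollary~\ref{cor:dbldual} applies verbatim to show $M^{*}$ is $(S_{2})$. Running the same argument with $M^{**}$ in place of $M$ shows $M^{***}$ is $(S_{2})$, in particular $(S_{1})$.

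Next, I will check that the natural map is an isomorphism in codimension $1$. Let $P \in \Spec R$ with $\hgt(P) \leqslant 1$. Then $R_{P}$ is a regular local ring of dimension at most one, i.e., a field or a DVR, in either case a principal ideal domain. The localization $(M^{*})_{P} \cong (M_{P})^{*}$ embeds into the free module $(F_{0}^{*})_{P}$, so it is a finitely generated torsion-free module over a PID, hence free, hence reflexive; therefore $(M^{*})_{P} \to (M^{***})_{P}$ is an isomorphism.

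The three hypotheses of Lemma~\ref{lem:isocodim1} are thereby met, and that lemma concludes the proof. The main technical point is the $(S_{2})$ verification for $M^{*}$ (and $M^{***}$), but this is essentially a reprise of the argument in Corollary~\ref{cor:dbldual} applied with $N = R$, so no genuine new obstacle arises.
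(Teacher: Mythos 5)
Your proof is correct and takes essentially the same route as the paper: consider the biduality map $M^* \to M^{***}$, observe that both sides are $(S_2)$ (by the free-presentation argument from Corollary~\ref{cor:dbldual} with $N=R$), check that the map is an isomorphism in codimension $1$ because $R_P$ is a PID and $(M^*)_P$ is torsion-free hence free hence reflexive, and finish with Lemma~\ref{lem:isocodim1}. The paper's proof is more terse but follows exactly this outline.
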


\begin{proof}
Consider the canonical map
$f \colon M^* \to M^{***}$.
In codimension 1, $R$ is
a principal ideal domain,
so $M^*$ is free, and $f$
is an isomorphism. Since
$R$ is $(S_2)$, so are
$M^*$ and $M^{***}$. By
Lemma~\ref{lem:isocodim1},
$f$ is an isomorphism.
\end{proof}

\begin{lemma} \label{lem:basechangerfl}
Let $R \to S$ be a flat
ring extension. Suppose
that $M$ is a reflexive
$R$-module. Then $M \otimes_R S$
is a reflexive $S$-module.
\end{lemma}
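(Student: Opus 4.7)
The plan is to use flat base change for $\chom$. Since $R$ is noetherian and $M$ is finitely generated, $M$ is finitely presented, so for any $R$-module $N$ there is a natural isomorphism
\[
  \chom_R(M,N)\otimes_R S \;\cong\; \chom_S(M\otimes_R S,\; N\otimes_R S),
\]
and this is well known to hold whenever $S$ is flat over $R$ (construct the map from a finite free presentation of $M$ and apply the five-lemma, using flatness of $S$ to commute $\otimes_R S$ past the left-exact $\chom$).

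First I would specialize this to $N=R$, obtaining a natural isomorphism $M^{*}\otimes_R S \cong (M\otimes_R S)^{*}$, where the second dual is taken as an $S$-module. Note that $M^{*}$ is again finitely generated over the noetherian ring $R$, so the same base-change result applies with $M^{*}$ in place of $M$, yielding
\[
  M^{**}\otimes_R S \;\cong\; (M^{*}\otimes_R S)^{*} \;\cong\; \bigl((M\otimes_R S)^{*}\bigr)^{*}
  \;=\; (M\otimes_R S)^{**}.
\]
Then I would tensor the reflexivity isomorphism $M\xrightarrow{\sim} M^{**}$ with $S$ (legitimate because $S$ is flat, so $-\otimes_R S$ preserves isomorphisms) to conclude that $M\otimes_R S \cong (M\otimes_R S)^{**}$.

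The only point that needs a sanity check — and the one place a sloppy argument could go wrong — is that the composite isomorphism above actually agrees with the canonical biduality map $M\otimes_R S \to (M\otimes_R S)^{**}$, rather than merely being some abstract isomorphism between these modules. This is a naturality statement: the flat base change isomorphism $\chom_R(M,N)\otimes_R S \to \chom_S(M\otimes_R S, N\otimes_R S)$ is natural in both variables, and the biduality map is built functorially from the evaluation pairing $M\otimes_R M^{*}\to R$. Applying $-\otimes_R S$ to this pairing and chasing naturality shows that the square
\[
  \xymatrix{
    M\otimes_R S \ar[r] \ar[d]
    & M^{**}\otimes_R S \ar[d] \\
    (M\otimes_R S) \ar[r]
    & (M\otimes_R S)^{**}
  }
\]
(with the top row the base change of biduality for $M$ and the bottom row biduality for $M\otimes_R S$) commutes, and the vertical maps are the base-change isomorphisms just constructed. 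Since the top arrow is an isomorphism by reflexivity of $M$, so is the bottom.
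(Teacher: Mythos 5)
Your proof is correct and is essentially the same argument as the paper's, which simply writes $M\otimes_R S=M^{**}\otimes_R S=(M\otimes_R S)^{**}$ using flat base change for $\chom$. You have merely spelled out the two applications of base change and the naturality check (that the resulting isomorphism is the canonical biduality map), which the paper leaves implicit.
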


\begin{proof}
Since $R \to S$ is flat,
we have $M \otimes_R S
= M^{**} \otimes_R S
= (M \otimes_R S)^{**}$.
\end{proof}

\begin{lemma} \label{lem:regft}
Suppose that $R$ is regular.
Consider the following
statements:\\
(a) $M \in \FF(R)$\\
(b) $M^*$ is free.\\
(c) $M^{**}$ is free.\\
Then (a) $\Rightarrow$
(b) $\Leftrightarrow$ (c).
If $M$ is free in codimension 1,
then (a) $\Leftrightarrow$
(b) $\Leftrightarrow$ (c).
\end{lemma}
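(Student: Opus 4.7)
The plan is to chain the three parts together using results established earlier in the paper. The substantive step will be $(a)\Rightarrow(b)$, which I will reduce to Corollary~\ref{cor:idealmcm} applied to $M^{**}$; everything else is a routine duality argument together with a direct check.

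First I would dispose of $(b)\Leftrightarrow(c)$: the dual of a free module is free, which gives $(b)\Rightarrow(c)$; conversely, Lemma~\ref{lem:mstar} shows $M^*$ is reflexive whenever $R$ is regular, so $M^* \cong M^{***} \cong (M^{**})^*$, and this is free as soon as $M^{**}$ is.

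For $(a)\Rightarrow(b)$, the key move is to double-dualize $M$ at once. By Proposition~\ref{prop:closureft}(a), $M^{**}\in \FF(R)$, and it is automatically $(S_2)$ since it is reflexive. A regular $F$-finite local ring is strongly $F$-regular, so Corollary~\ref{cor:idealmcm} forces $M^{**}$ to be MCM. On the regular local ring $R$, the Auslander-Buchsbaum formula then collapses MCM to free, so $M^{**}$ is free and $(b)$ follows from the equivalence already established. This is the step I expect to require the most care, because its apparent simplicity is supported by the full machinery of Section~\ref{mainSec}, and one must be sure to reduce to $M^{**}$ before invoking Corollary~\ref{cor:idealmcm}.

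For the remaining implication under the hypothesis that $M$ is free in codimension one, I would assume $M^{**}\cong R^r$. Since $R$ is regular, $\upe{R}$ is flat over $R$, so the natural isomorphism $M\to M^{**}$ in codimension one pulls back under $F^e$ to an isomorphism $F^e(M) \to F^e(M^{**})$ in codimension one; passing to reflexive hulls and invoking Lemma~\ref{lem:isocodim1} then gives $M(e)\cong M^{**}(e)$. But $F^e(M^{**}) = \upe{R}^r$ is already reflexive over $\upe{R}$, so $M^{**}(e)\cong \upe{R}^r$, which under the identification $\upe{R}=R$ is simply $R^r$. Hence every $M(e)\cong R^r$, so $\{M(e)\}_{e\ge 0}\subseteq \add_R(R)$ and $M\in \FF(R)$. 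The only bookkeeping subtlety in this last paragraph is the passage through the identification of $\upe{R}$ with $R$, which is harmless since that identification is a ring isomorphism.
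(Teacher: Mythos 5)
Your proof is correct, and the $(b)\Leftrightarrow(c)$ equivalence and the $(c)\Rightarrow(a)$ direction under the codimension-one hypothesis match the paper's argument essentially verbatim (the paper routes the latter through Proposition~\ref{prop:closureft}(a), whose proof gives $(M^{**})(e)\cong M(e)$; your explicit pullback-and-reflexify argument is a spelled-out version of the same thing). Where you genuinely diverge from the paper is $(a)\Rightarrow(c)$. The paper gives an \emph{elementary} argument: replace $M$ by $M^{**}$, use flatness of Frobenius (Kunz) to identify $M(e)$ with $F^e_R(M)$, appeal to Corollary~\ref{cor:periodicity} to get $M(e)\cong M(e+f)$, and then run a Fitting-ideal / minimal-free-resolution calculation with Nakayama to force the presentation matrix to vanish. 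You instead observe that a regular $F$-finite local ring is strongly $F$-regular and invoke Corollary~\ref{cor:idealmcm} to conclude $M^{**}$ is MCM, then drop to free via Auslander--Buchsbaum. This works, and it is not circular --- Corollary~\ref{cor:idealmcm} sits entirely in Section~\ref{mainSec} and its dependency chain (Theorems~\ref{thm:mainweak} and \ref{thm:mainstrong}, Corollaries~\ref{cor:dbldual}, \ref{cor:dblcech}, \ref{cor:meplusf}, Proposition~\ref{prop:cmpunc}) never touches Lemma~\ref{lem:regft}. The tradeoff: your route is shorter on the page but pulls in the full local-cohomology/asymptotic-splitting machinery, whereas the paper's Fitting-ideal argument is self-contained, more elementary, and makes visible exactly why regularity (through flatness of Frobenius) is the driving hypothesis. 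One small thing worth flagging: you correctly identified that one must pass to $M^{**}$ \emph{before} invoking Corollary~\ref{cor:idealmcm}, since the $(S_2)$ hypothesis there is essential and $M$ itself need not satisfy it; the paper makes the same reduction.
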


\begin{proof}
(b) $\Rightarrow$ (c): Obvious.\\
(c) $\Rightarrow$ (b):
If $M^{**}$ is free, then
so is $M^{***} =
(M^*)^{**}$. So $M^*$
is free by Lemma~\ref{lem:mstar}.\\
(a) $\Rightarrow$ (c):
Suppose that $M \in \FF(R)$.
By Proposition~\ref{prop:closureft}(a)
and Lemma~\ref{lem:isocodim1},
we may replace $M$ by $M^{**}$
and assume that $M$ is
reflexive. So we need to show
that $M$ is free. Since
$R$ is regular, the ring
extension $R \to \upe{R}$
is flat. By Lemma~\ref{lem:basechangerfl},
we have $M(e)=(M \otimes_R \upe{R})^{**}
= M \otimes_R \upe{R}$ for
all $e \geqslant 0$.
By Corollary~\ref{cor:periodicity},
we have $M(e') \cong M(e'+e)$ for some
$e' \geqslant 0$ and $e>0$.
First, suppose that $e'=0$,
so that $M \cong M(e)$.
Consider a minimal
free resolution
\[
  \cdots \to
  F_1 \xrightarrow{A}
  F_0 \to M \to 0
\]
of $M$ given by the matrix $A$.
Let $I(M) \subseteq \mathfrak{m}$
be the Fitting
ideal of $M$ generated by
the entries of $A$ and let
$q=p^e$. Tensoring with $\upe{R}$
gives a free resolution
\[
  \cdots \to
  F_1 \xrightarrow{A^{[q]}}
  F_0 \to M(e) \to 0,
  \tag{\st}
\]
so $I(M(e)) = I(M)^{[q]}$. Since
$M \cong M(e)$, we have
$I(M) = I(M(e)) =
I(M)^{[q]}$, so
$I(M) = I(M)^{q}$.
By Nakayama's Lemma, we
have $I(M)=0$. So $A=0$,
and $M^{**}=M$ is free.\\
In general, the above shows
that $M(e)$ is free for some
$e \geqslant 0$ by
Corollary~\ref{cor:meplusf}.
Suppose that $e>0$. Since
(\st) is a free resolution
of the free module $M(e)$,
it is a direct sum of a
trivial complex and the
resolution $0 \to F \to
M(e) \to 0$. So the entries
in $A^{[q]}$ are either 1 or 0.
Since $A$ has entries in
$\mathfrak{m}$ and $R$ is a
domain, we must have $A=0$,
so again $M$ is free.\\
(c) $\Rightarrow$ (a):
Suppose that $M$ is free in
codimension 1.
If $M^{**}$ is free, then
$M^{**} \in \FF(R)$, so
$M \in \FF(R)$ by
Proposition~\ref{prop:closureft}(a).
\end{proof}

\begin{thm} \label{thm:addcm}
Let $\phi \colon R
\to S$
be a finite homomorphism
of normal domains such that
$\phi$ is \'{e}tale in
codimension $1$ and splits as a map of
$R$-modules. Let $N,L$ be $R$-modules such that
$N$ is $(S_2)$ and that locally, $(N,L)$ is an
abundant pair and $N \in \add(L)$.
Then $L \in \add_R(\CM(S))$.
\end{thm}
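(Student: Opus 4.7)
The plan is to apply Theorem~\ref{thm:mainstrong} with $M = S$ to produce a maximal Cohen--Macaulay $R$-module out of which $L$ can be extracted as a summand using the splitting, then promote that module to a CM $S$-module via its natural $S$-structure. First I would verify the hypotheses of Theorem~\ref{thm:mainstrong}: Example~\ref{eg:modft}(b) gives $S \in \FF(R)$ and (since $S$ is reflexive over $R$, being locally free in codimension~$1$ by \'etaleness and $(S_2)$ as a normal ring) we have $S(0) = S^{**} = S$; the normal-domain hypothesis on $R$ supplies equidimensionality and $(S_2)$; and the phrase ``locally, $(N,L)$ is an abundant pair and $N \in \add(L)$'' supplies the abundant-pair and $\add$-containment conditions at every prime, in particular at every prime of height $\geq 3$. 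Plugging $e = 0$ into the conclusion yields that $W := \chom_R(S, L)$ is MCM over $R$.

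Next I would use the hypothesized $R$-linear splitting $\pi \colon S \to R$ of $\phi$ to extract $L$ from $W$. Applying $\chom_R(-, L)$ to the composite $R \xrightarrow{\phi} S \xrightarrow{\pi} R$ (which is the identity on $R$) gives $R$-linear maps $L = \chom_R(R, L) \to W \to \chom_R(R, L) = L$ whose composite is the identity, so $L$ is an $R$-direct summand of $W$.

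Finally I would show that $W$, equipped with its natural $S$-module structure via the source, is CM over $S$. Since $R \to S$ is finite and $R$ is local, $S$ is semilocal, with each maximal ideal $\mathfrak{n}$ of $S$ lying over $\mathfrak{m}$ and satisfying $\dim S_{\mathfrak{n}} = d$ by the dimension formula for finite extensions of normal domains. The standard identification $\dep_R W = \min_{\mathfrak{n}} \dep_{S_{\mathfrak{n}}} W_{\mathfrak{n}}$ combined with $\dep_R W = d$ forces $\dep_{S_{\mathfrak{n}}} W_{\mathfrak{n}} = d = \dim_{S_{\mathfrak{n}}} W_{\mathfrak{n}}$ at each $\mathfrak{n}$, so $W \in \CM(S)$ and hence $L \in \add_R(\CM(S))$. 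The main obstacle I anticipate is precisely this last depth/dimension bookkeeping when $S$ is genuinely semilocal; the rest of the argument is a direct application of Theorem~\ref{thm:mainstrong} together with the splitting of $\phi$.
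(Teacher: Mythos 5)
Your proposal is correct and takes essentially the same route as the paper: apply Theorem~\ref{thm:mainstrong} with $M = S$ (using Example~\ref{eg:modft}(b) for $S \in \FF(R)$) to make $\chom_R(S,L)$ maximal Cohen--Macaulay over $R$, observe it lies in $\CM(S)$ via its natural $S$-structure, and use the splitting of $\phi$ to exhibit $L$ as a direct summand. The paper's proof is terser, leaving the verification that a finitely generated $S$-module which is MCM over $R$ is Cohen--Macaulay over the semilocal ring $S$ as implicit; your explicit depth/dimension bookkeeping over the maximal ideals of $S$ correctly fills in that step.
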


\begin{proof}
Since $S$ is of finite $F$-type over $R$,
$\chom_R(S,L)$ is C-M by Theorem~\ref{thm:mainstrong},
so $\chom_R(S,L)$ $\in \CM(S)$. Since $\phi$
splits, $L$ is a direct summand of $\chom_R(S,L)$.
\end{proof}

\begin{cor}\label{quotient}
Let $\phi \colon R
\to S$
be a finite homomorphism
of normal domains such that
$\phi$ is \'{e}tale in
codimension $1$ and splits as a map of
$R$-modules and that $S$
is regular (for example, if $R$
is a quotient singularity as in example \ref{HNex}). If
$M \in \FF(R)$, then $M^* \in \add_R(S)$. If $S^* = \Hom_R(S,R)\cong S$ (again, for example if $R$
is a quotient singularity as in \ref{HNex}) then $\FF(R)= \add_R(S)$. 
\end{cor}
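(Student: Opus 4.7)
The plan is to push the problem to the regular ring $S$, where $\FF$-type modules behave simply (by Lemma~\ref{lem:regft}), and then pull back using the splitting of $\phi$.

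For the first claim, I begin by showing $M \otimes_R S \in \FF(S)$. Since $\phi$ is \'etale in codimension~$1$, any height-$1$ prime $Q$ of $S$ contracts to a height-$1$ prime $P = \phi^{-1}(Q)$ of $R$ (by going down for the finite extension of normal domains), at which $M_P$ is free by the defining hypothesis of $\FF(R)$. This verifies hypothesis (b) of Lemma~\ref{lem:basechangeft}, yielding $M \otimes_R S \in \FF(S)$. Because $S$ is regular, Lemma~\ref{lem:regft} forces $\Hom_S(M \otimes_R S, S)$ to be free as an $S$-module. By tensor-hom adjunction this module is canonically $\Hom_R(M, S)$, so $\Hom_R(M, S)$ is free over $S$ and in particular lies in $\add_R(S)$.

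Next I extract $M^*$ using the splitting. Since $\phi$ splits as a map of $R$-modules, $S \cong R \oplus S'$ as $R$-modules for some $S'$. Applying $\Hom_R(M, -)$ gives $\Hom_R(M, S) \cong M^* \oplus \Hom_R(M, S')$, so $M^*$ is an $R$-module direct summand of a module in $\add_R(S)$, hence $M^* \in \add_R(S)$. This completes the first claim.

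For the second claim, the containment $\add_R(S) \subseteq \FF(R)$ follows from Example~\ref{eg:modft}(b) together with closure of $\FF(R)$ under direct sums and summands. For the reverse inclusion, take $M \in \FF(R)$; replacing $M$ by its reflexive hull $M^{**}$ (which remains in $\FF(R)$ by Proposition~\ref{prop:closureft}(a), using that $R$ is normal and hence $(S_2)$), we may assume $M$ is reflexive. The first claim then gives $M^* \in \add_R(S)$, and dualizing again yields $M = M^{**} \in \add_R(S^*) = \add_R(S)$ by the hypothesis $S^* \cong S$. The main technical point will be verifying the base change step $M \otimes_R S \in \FF(S)$: the \'etale-in-codimension-$1$ hypothesis enters exactly here, ensuring the contraction of height-$1$ primes from $S$ to $R$ that makes Lemma~\ref{lem:basechangeft}(b) applicable; the remaining steps are formal manipulations with adjunction and summands.
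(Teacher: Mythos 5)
Your proof is correct and follows essentially the same route as the paper's: base change $M$ to $S$ via Lemma~\ref{lem:basechangeft}, invoke Lemma~\ref{lem:regft} over the regular ring $S$ to get that $\Hom_S(M\otimes_R S,S)\cong\Hom_R(M,S)$ is $S$-free, and then peel off $M^*$ as a summand using the splitting of $\phi$; the concluding dualization for $\FF(R)=\add_R(S)$ using $S^*\cong S$ is also the argument the authors intend. The only point worth noting is that your appeal to going-down is slightly more than is needed for the hypothesis of Lemma~\ref{lem:basechangeft}(b): finiteness of $\phi$ already gives $\height\phi^{-1}(Q)\leqslant\height Q$ by incomparability, which suffices since $M$ is locally free in codimension $\leqslant 1$.
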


\begin{proof}
Let $N = M \otimes_R S$. If
$M \in \FF(R)$, then $N \in \FF(S)$
by Lemma~\ref{lem:basechangeft}.
Then $N^*$ is free by
Lemma~\ref{lem:regft}. We have
$N^* = \chom_S(M \otimes_R S,
S) = \chom_R(M,\chom_S(S,S))
= \chom_R(M,S)$. Since $f$ splits,
$\chom_R(M,R)$ is a direct
summand of $\chom_R(M,S)$, so
$M^* \in \add_R(S)$. If $S^*\cong S$, then it follows that $\FF(R) \subseteq \add_R(S)$, and $S\in \FF(R)$ by Example \ref{eg:modft}.
\end{proof}

\begin{cor}\label{cor:inva}
Suppose that $k$ is
algebraically closed.
Let $S = k[[x_1,\dots,
x_d]]$. Let $G$ be a finite
subgroup of $GL(d,k)$ that
contains no pseudo-reflections
such that the order of $G$
is coprime to $p$. Let
$R = S^G$. Then $\FF(R) = \add_R S$. 
\end{cor}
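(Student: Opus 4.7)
The plan is to apply Corollary~\ref{quotient} to the inclusion $\phi: R = S^G \hookrightarrow S$. This requires verifying: (a) $R,S$ are normal domains with $\phi$ finite; (b) $\phi$ is \'etale in codimension $1$; (c) $\phi$ splits as $R$-modules; (d) $S$ is regular; and (e) $S^\ast = \Hom_R(S,R) \cong S$. Items (a) and (d) are immediate. For (c), since $|G|$ is invertible in $k$ the Reynolds operator $\frac{1}{|G|}\sum_{g \in G} g : S \to R$ is an $R$-linear retraction of $\phi$. For (b), the hypothesis that $G$ contains no pseudo-reflections means that for each $1 \neq g \in G$ the fixed locus $\mathrm{Fix}(g) \subseteq \spec S$ has codimension $\geqslant 2$; the ramification locus of $\phi$ is covered by these fixed loci, so $\phi$ is unramified (hence \'etale, by tameness from $\gcd(|G|,p)=1$) in codimension $1$.

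The substantive point is (e), and this is where I expect the main difficulty. The plan is to use the trace map $\mathrm{tr}: S \to R$, $\mathrm{tr}(s) = \sum_{g \in G} g(s)$, to define an $R$-linear map $\mu : S \to \Hom_R(S,R)$ by $\mu(s)(t) = \mathrm{tr}(st)$. At each height-one prime $P$ of $R$, (b) gives that $R_P \to S_P$ is finite \'etale; a standard fact for such extensions is that the trace form is a perfect pairing, so $\mu$ is an isomorphism in codimension $1$. To promote this to a global isomorphism I would invoke Lemma~\ref{lem:isocodim1}: $S$ is $(S_2)$ as an $R$-module (being regular, hence Cohen-Macaulay), and $\Hom_R(S,R)$ is $(S_2)$ as well (the argument from Corollary~\ref{cor:dbldual} applies, since $R$ is Cohen-Macaulay by Hochster-Eagon, hence $(S_2)$). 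Thus $\mu$ is an isomorphism, establishing (e).

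With every hypothesis of Corollary~\ref{quotient} verified, the conclusion $\FF(R) = \add_R S$ follows. The only nontrivial step is (e); the remaining verifications are standard features of tame linearly reductive quotient singularities.
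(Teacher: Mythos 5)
Your proof is correct and follows exactly the route the paper takes: it reduces the statement to Corollary~\ref{quotient} applied to $\phi\colon R = S^G \hookrightarrow S$, with the necessary hypotheses (splitting via the Reynolds operator, \'etaleness in codimension one from the absence of pseudo-reflections, and $S^* \cong S$ via the trace pairing combined with Lemma~\ref{lem:isocodim1}) being standard facts about tame small quotient singularities. The paper's proof is simply the sentence ``Use Corollary~\ref{quotient},'' so your write-up supplies the verifications that the paper leaves implicit rather than taking a different approach.
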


\begin{proof}
Use Corollary \ref{quotient}.
\end{proof}

\begin{thm} \label{thm:cift}
Suppose that $R$ is a
complete intersection
and $M$ is an $R$-module that
is free in codimension 2.
Then $M \in \FF(R)$ if and only if
$M^{**}$ is free.
\end{thm}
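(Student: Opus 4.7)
The ``if'' direction is immediate: if $M^{**}$ is free, then the natural map $M \to M^{**}$ is an isomorphism in codimension one (since $M$ is free in codimension two), so each induced map $M(e) \to M^{**}(e)$ is an isomorphism in codimension one between reflexive modules, hence an isomorphism by Lemma~\ref{lem:isocodim1}. Each $M^{**}(e)$ is free, so $\{M(e)\}_{e \geq 0} \subseteq \add_R(R)$ and $M \in \FF(R)$.

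For the ``only if'' direction, assume $M \in \FF(R)$. By Proposition~\ref{prop:closureft}(a) we replace $M$ with $M^{**}$ and assume $M$ is reflexive. Corollary~\ref{cor:periodicity} yields $e \geq 0$ and $f > 0$ with $M(e) \cong M(e+f)$; setting $q = p^f$ and $N := M(e)$, Corollary~\ref{cor:meplusf} gives $(F^f N)^{**} = N(f) \cong N$, with $N$ reflexive and free in codimension two. The plan is to show that such an $N$ must be free, then descend to conclude $M$ is free.

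The argument mirrors the Fitting-ideal proof of Lemma~\ref{lem:regft}, replacing the flatness of $\upe R$ (which fails outside the regular case) with two complete-intersection inputs. First, $\upe R$ is maximal Cohen--Macaulay over the CI $R$ (any system of parameters of $R$ raised to the $q$-th power remains regular on $R$), and since $N$ is free in codimension two, the modules $\Tor^R_{\geq 1}(N, \upe R)$ have support of dimension at most $d-3$. A local-cohomology chase on a free resolution of $N$ tensored with $\upe R$---the MCM-ness of $\upe R$ collapses one of the two spectral sequences computing the hypercohomology---will give $\depth_R(F^f N) \geq 2$; combined with the codimension-one identification from $(F^f N)^{**} \cong N$, this forces $F^f N$ to be itself reflexive, hence $F^f N \cong N$ as $R$-modules. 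Second, pulling a minimal presentation $R^m \xrightarrow{A} R^n \to N \to 0$ through Frobenius yields a presentation $R^m \xrightarrow{A^{[q]}} R^n \to F^f N \to 0$ with entries in $\mathfrak m$, minimal in both generators and relations because $F^f N \cong N$. The isomorphism $F^f N \cong N$ then forces the Fitting-ideal identity $I(N) = I(A^{[q]}) = I(N)^{[q]}$; since $I(N) \subseteq \mathfrak m$ implies $I(N)^{[q]} \subseteq \mathfrak m\, I(N)$, Nakayama gives $I(N) = 0$, so $A = 0$ and $N$ is free.

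Finally, to descend from $N = M(e)$ being free to $M$ being free, the same depth estimate applied to $F^e M$ will show it is reflexive, so $F^e M \cong (F^e M)^{**} = M(e) = N$ is free; the presentation $R^m \xrightarrow{A_M^{[p^e]}} R^n \to F^e M \to 0$ arising from the minimal presentation $A_M$ of $M$ is then minimal (both $\mu$ and $\mu_2$ of $F^e M$ match those of $M$), so it is a minimal presentation of a free module of rank $\mu(M)$, forcing $A_M^{[p^e]} = 0$ and hence $A_M = 0$ since $R$ is a domain. The principal obstacle is the reflexivity of $F^f N$: establishing this depth estimate via the Cohen--Macaulayness of $\upe R$ is where the complete intersection hypothesis enters essentially, since $\upe R$ is typically not Cohen--Macaulay over a more general $R$.
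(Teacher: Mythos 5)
The ``if'' direction is fine and matches the paper (via Proposition~\ref{prop:closureft}(a) and index shifting). The ``only if'' direction has a genuine gap, and the gap is exactly at the step you flag as ``the principal obstacle.''

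Your plan hinges on establishing $F^f N \cong N$ (not merely $(F^f N)^{**} \cong N$), since the Fitting-ideal argument requires $A^{[q]}$ to be a minimal presentation \emph{of $N$}. You propose to get reflexivity of $F^f N$ from a depth estimate driven by the Cohen--Macaulayness of $\upe R$. But $\upe R$ is a maximal Cohen--Macaulay $R$-module whenever $R$ is Cohen--Macaulay --- it has the same underlying abelian group, the same dimension, and an $R$-regular sequence stays $\upe R$-regular because $x$ acting on $\upe R$ is $x^q$ acting on $R$ --- so your stated mechanism uses nothing beyond the CM property. If it actually proved $F^f N \cong N$, it would prove the theorem for every CM ring, and that is false: take $R = S^G$ an isolated quotient singularity of dimension $3$ (so CM and regular in codimension $2$, but typically not a complete intersection), and $N = S$. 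Then $N$ is reflexive, free in codimension $2$, lies in $\FF(R)$ by Example~\ref{eg:modft}(b), and indeed $N(f) \cong N$ for suitable $f$, yet $N = N^{**}$ is not free. So for such $N$ the map $F^f N \to (F^f N)^{**} \cong N$ cannot be an isomorphism, and the depth estimate you want is simply not available. Concretely, the spectral sequence $E_2^{p,q} = \lc^p_{\m}\bigl(\Tor_{-q}^R(N,\upe R)\bigr)$ contributes $\lc^0_{\m}(\Tor_1)$, $\lc^0_{\m}(\Tor_2)$, etc., to the low-degree hypercohomology, and nothing about $\upe R$ being MCM forces these finite-length Tor modules to vanish.

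The actual role of the complete intersection hypothesis is quite different from what you describe, and the paper's proof does not attempt to realize $F^f N \cong N$ for any single $f$. Instead it works asymptotically: it invokes \cite[Proposition~4.14]{DS} to identify $\lc^r_{\m}(F^e_R(M))$ with $\lc^r_{\m}(F^e_R(M)^{**}) = \upe{\lc^r_{\m}}(M(e))$ at $r = \depth M$, notes that $M \in \FF(R)$ forces the lengths $\lambda_R(\lc^r_{\m}(M(e)))$ to range over a finite set, so that
$\lim_e \lambda_R(\lc^r_{\m}(F^e_R(M)))/p^{e(d+\alpha(R))} = 0$,
and then applies \cite[Theorem~4.12]{DS} --- a CI-specific statement about the growth of local cohomology of Frobenius powers and its relation to projective dimension --- to conclude $\pd_R M < d - r$, yielding a contradiction with Auslander--Buchsbaum unless $M$ is MCM of projective dimension $0$. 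Those Dao--Smirnov results encode the complexity-theoretic structure of modules over complete intersections; replacing them by the bare MCM-ness of $\upe R$ loses the content and, as the quotient-singularity example shows, too much of it.
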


\begin{proof}
The proof of ``if'' is given
by (c) $\Rightarrow$ (a) in
Lemma~\ref{lem:regft}. For
the ``only if'' part, as
in the proof of Lemma~\ref{lem:regft}
(a) $\Rightarrow$ (c), it
suffices to assume that $M$
is reflexive and show that $M$
is free. The proof is by
induction on $d$. For $d \geqslant 3$,
$M$ is free on $\spec R \setminus
\{ \m \}$ by induction. Let
$r = \dep M \geqslant 2$.
Suppose that $r < d$. Then
by \cite[Proposition~4.14]{DS},
we have $\CH^r_{\m} (F^e_R(M))
\cong \CH^r_{\m} (F^e_R(M)^{**})
= \upe{\CH^r_{\m}} (M(e))$ as
in Theorem~\ref{thm:mainweak}, so
\[
  \lim_{e \to \infty}
  \frac{\lambda_R(\CH^r_{\m}(F^e_R(M))}
  {p^{e(d+\alpha(R))}}
  = \lim_{e \to \infty}
  \frac{\lambda_R(\upe{\CH^r_{\m}} (M(e)))}
  {p^{e(d+\alpha(R))}}
  = \lim_{e \to \infty}
  \frac{p^{e \alpha(R)}
  \lambda_R(\CH^r_{\m} (M(e)))}
  {p^{e(d+\alpha(R))}}
  = 0
\]
by the assumption that $M \in \FF(R)$.
Then \cite[Theorem~4.12]{DS}
gives $\pd_R M < d-r$. By the
Auslander-Buchsbaum formula,
we have $\dep M > r$, a
contradiction. So $r=d$, and again
\cite[Theorem~4.12]{DS} with
$k = r-1$ shows that $\pd_R M=0$,
so $M$ is free.
\end{proof}

\begin{eg}
Let $R = k[[x,y,z]]/(xy-z^2)$
and $M$ be the ideal $(x,z)$.
Then $[M]$ has order 2 in
$\Cl(R)$ and so $M \in \FF(R)$
by Example~\ref{eg:modft},
but $M^{**}$ is not free, so
the condition that $M$ is
locally free in codimension 2
is necessary in
Theorem~\ref{thm:cift}.
\end{eg}

\section{\texorpdfstring{$F$}{F}-abundant
pairs and modules} \label{sec:abun}
In this section we give many examples of $F$-abundant pairs and modules.  
\begin{eg} \label{eg:abdtpair}
(a) If $\sdim R \geqslant 1$,
in particular if $R$
is strongly $F$-regular of
dimension $\geqslant 1$,
then $(R,R)$ is an
abundant pair.\\
(b) \cite[Proposition~2.3]{Kunz}
shows that $\alpha(R_P) = \alpha(R)
+ \dim (R/P)$.
Let $N,L$ be as in
Definition~\ref{defn:intro}
and $P \in \spec R$.
If $\liminf_{e \to \infty}
p^{e(\alpha(R) +
\dim(R/P))}/b_e
= 0$, then $(N_P,L_P)$ is
an abundant pair.\\
(c) $F$-contributors for
modules of finite $F$-representation
type, as in \cite[Section~2
]{Yao}.
\end{eg}

\begin{eg} (\cite[Example 6.1]{DS})
Let $k$ be an algebraically closed field of
characteristic $p>2$. Consider the hypersurface
$R=k[[x,y,u,v]]/(xy-uv)$. Then
every MCM $R$-module is $F$-abundant.
\end{eg}

\begin{eg}\label{HNex}
Let $k$ be an algebraically closed field of characteristic $p>0$ and $V$ be a $k$-vector space of dimension $d$. Let $S$ be the symmetric algebra of $V$. Let $G$ be a finite subgroup of $GL(V)$ without pseudo-reflections such that $|G|$ is coprime to $p$. Let $R=S^G$ be the ring of invariants. Let $V_0,\cdots, V_n$ be a complete set of irreducible representations of $G$ over $k$. Let $M_i= (S\otimes_k V_i)^G$. It is classical that $\add_R(S)=\{M_0,M_1,\dots,M_n\}$.  Also, by the main results of \cite{HN}:
\begin{enumerate}[align=left,leftmargin=*]
\item $S$, and hence all of $M_0, M_1,\dots, M_n$, are modules of finite $F$-type. Note that $\rk M_i =\dim_k V_i$, so we have many examples of modules of finite $F$-type which are not ideals. 
\item $(M_i,M_j)$ is an $F$-abundant pair for all $0\leq i,j\leq n$. 
\end{enumerate} 
\end{eg}

\begin{cor} \label{cor:thmsharp}
Let $R,N$ be as in
Theorem~\ref{thm:mainweak}.
Suppose that $\liminf_{e \to \infty}
p^{e(\alpha(R)+d-3)}/b_e=0$
as in Example~\ref{eg:abdtpair}
with $N=L$
(for example, when $d=3$ and
$(N,N)$ is an abundant pair).
Then $N$ is MCM. In particular,
if $R$ is regular, then $N$
is free.
\end{cor}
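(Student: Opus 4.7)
The plan is to deduce this directly from Theorem \ref{thm:mainstrong} by taking $M = R$ (and playing the role of $L$ in that theorem with $N$ itself). Since $R$ is locally free everywhere it lies in $\FF(R)$ by Example \ref{eg:modft}(a'), and one has $R(e) \cong R$ for every $e \geq 0$, so $\chom_R(R(e), N) \cong N$. Thus once the hypotheses of Theorem \ref{thm:mainstrong} are verified in this specialization, the MCM conclusion is immediate.

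First I would dispose of small dimensions: when $d \leq 2$ the $(S_2)$ condition on $N$ already forces $N$ to be MCM, so I may assume $d \geq 3$. The containment condition $N_P \in \add N_P$ required by Theorem \ref{thm:mainstrong} is then trivial, so the only thing of substance to verify is that $(N_P, N_P)$ is an abundant pair for every $P \in \spec R$ with $\hgt(P) \geq 3$. For this I would invoke Example \ref{eg:abdtpair}(b): equidimensionality of $R$ gives $\dim(R/P) = d - \hgt(P) \leq d - 3$, so
\[
  \liminf_{e \to \infty} \frac{p^{e(\alpha(R) + \dim(R/P))}}{b_e}
  \;\leq\;
  \liminf_{e \to \infty} \frac{p^{e(\alpha(R) + d - 3)}}{b_e} \;=\; 0
\]
by the hypothesis of the corollary, which is exactly the abundant pair condition at $P$.

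With both hypotheses of Theorem \ref{thm:mainstrong} in place, its conclusion yields $N \cong \chom_R(R, N)$ MCM. For the last sentence, if $R$ is moreover regular then the Auslander-Buchsbaum formula forces $\pd_R N = 0$, so $N$ is free. The corollary is essentially Theorem \ref{thm:mainstrong} specialized to $M = R$, $L = N$, so there is no real obstacle here beyond unwinding the notation and translating the single global $\liminf$ hypothesis into the family of local abundant pair conditions via Example \ref{eg:abdtpair}(b).
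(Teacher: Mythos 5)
Your proof is correct and matches the paper's approach: the paper simply says ``Theorem~\ref{thm:mainstrong} with $M=R$ and $e=0$ shows that $N$ is MCM,'' and your write-up fills in exactly the details left implicit there---namely that $R\in\FF(R)$ with $\chom_R(R(e),N)\cong N$, that the localized abundance hypothesis at primes of height $\geqslant 3$ follows from Example~\ref{eg:abdtpair}(b) together with $\dim(R/P)\leqslant d-3$, and that the regular case follows by Auslander--Buchsbaum.
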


\begin{proof}
Theorem~\ref{thm:mainstrong}
with $M=R$ and $e=0$ shows that
$N$ is MCM.
\end{proof}

\begin{eg} \label{eg:sharp_kxyz}
Let $k$ be a perfect field,
$R=k[[x,y,z]]$ and $M$ be
the ideal $(x,y)$. Then
$(M,M)$ is an abundant pair,
but $M$ is not $(S_2)$.
So the assumption in
Corollary~\ref{cor:thmsharp}
for $N$ to be $(S_2)$ cannot
be weakened.
\end{eg}

\begin{proof}
Let $C=k[[z]]$. Consider the
exact sequence
\[
  0 \to M \to R \to C \to 0
\]
Let $q=p^e$.
Since $\upe{-}$ is an exact
functor, we have
\[
  \xymatrix{
  0 \ar[r]
  & \upe{M} \ar[r]
  & \upe{R} \ar[r] \ar@{=}[d]
  & \upe{C} \ar[r] \ar@{=}[d]
  & 0\\
  && R^{\oplus q^3} \ar[r]
  & C^{\oplus q} \ar[r]
  & 0
  }
\]
It follows that $\upe{M}$ has
exactly $q$ copies of $M$,
so $(M,M)$ is an abundant pair.
Since $1 = \dep M_M < \dim M_M
= 2$, $M$ is not $(S_2)$.
\end{proof}

\begin{eg}
Let $k$ be a perfect field,
$R=k[[x_1,x_2,\dots,x_{d-3},u,v,w]]$,
$C=R/(u,v,w)$ and $M=\Omega^2 C$,
the second syzygy of $C$. Then
$M$ is $(S_2)$ and $(M,M)$ is
an abundant pair.
So the assumptions in
Corollary~\ref{cor:thmsharp}
cannot be weakened.
\end{eg}

\begin{proof}
Let $q=p^e$. Then as in
Example~\ref{eg:sharp_kxyz},
we have
\[
  \xymatrix{
  0 \ar[r]
  & \upe{M} \ar[r]
  & \upe{(R^3)} \ar[r] \ar@{=}[d]
  & \upe{R} \ar[r] \ar@{=}[d]
  & \upe{C} \ar[r] \ar@{=}[d]
  & 0\\
  && R^{\oplus 3q^d} \ar[r]
  & R^{\oplus q^d} \ar[r]
  & C^{\oplus q^{d-3}} \ar[r]
  & 0
  }
\]
It follows that $\upe{M}$ has
exactly $q^{d-3}$ copies of $M$.
Then as in Corollary~\ref{cor:thmsharp},
we have $\liminf_{e \to \infty}
q^{(\alpha(R)+d-3)}/b_e
= q^{d-3}/q^{d-3} = 1$.
Since $M$ is a second syzygy,
it is $(S_2)$. Since $\dep C
= d-3$, we have $\dep M = d-1
\neq d$, so $M$ is not free.
\end{proof}

\section{Geometric applications}
\label{sec:vecbun}

\setcounter{thm}{-1}

\begin{discuss} \label{dis:vecbun}
Let $(A,\m)$ be a standard
graded ring that is $(S_2)$
of dimension at least 2. We
first fix some notation and
record some results from
\cite{Hor,Mz}. Let $R = A_{\m}$.
Let $X = \bproj(A)$, $Y =
\bspec(R) \setminus \{\m_{\m}\}$
and $Z = \bspec(R)$.
Let $\iota \colon Y \to Z$
be the inclusion morphism.
There is also an affine
surjective morphism
$\pi \colon Y \to X$
(\cite[Proposition~I.5]{Mz}).
Corollary I.6 of \cite{Mz}
states that for every
$A$-graded module $M$ we have
sheaf cohomology isomorphisms
\[
  \oplus_{d \in \mathbb{Z}}
  H^i (X,\tilde{M}(d))
  \xrightarrow{\simeq}
  H^i(Y,\tilde{M_{\m}}|_Y)
\]
for $i \geqslant 0$. For $1
\leqslant i \leqslant d-1$,
we have $H^i(Y,\tilde{M_{\m}}|_Y)
\cong H^{i+1}_{\m} (M_{\m})$.

Let $V(\cdot)$ denote the
category of vector bundles
over a scheme. Let $\Gamma$
be the global section functor.
If $\scr{G} \in V(Y)$,
then $\iota_* \scr{G}$ is
a coherent $\scr{O}_Z$-module.
Let $\psi = \Gamma \circ
\pi^*$. Then we have maps
\[
  V(X) \xrightarrow{\pi^*}
  V(Y) \xrightarrow{\Gamma}
  \{ M \in \refl(R) \mid
  M \text{ is locally free on }
  Y \}
\]
By \cite[Theorem~1.3]{Hor},
$\dep \Gamma(\scr{G}) \geqslant 2$.
$\Gamma$ satisfies the
property $\Gamma(\scr{G}_1 \otimes
\scr{G}_2) = (\Gamma(\scr{G}_1)
\otimes_R \Gamma(\scr{G}_2))^{**}$.
Let $\scr{F}_i \in V(X)$, $i=1,2$,
be indecomposable such that
$\psi(\scr{F}_i)$ are isomorphic
up to free summand. Then by
\cite[Proposition~9.5]{Hor},
$\scr{F}_1 \cong \scr{F}_2 (m)
= \scr{F}_2 \otimes \scr{O}_X(m)$
for some $m$.
\end{discuss}

\begin{defn}
(\cite[Introduction]{BK})
Let $X$ be a smooth projective
variety defined over a field $k$
of characteristic $p>0$. Let
$\phi \colon X \to X$ be the
absolute Frobenius morphism.
Then a vector bundle $\scr{F}
\in V(X)$ is $(e,f)$-Frobenius
periodic, or $(e,f)$-F periodic
in short, if there are $e<f$
such that $(\phi^e)^*(\scr{F})
\cong (\phi^f)^*(\scr{F})$.
\end{defn}

\begin{lemma} \label{lem:fperiodic}
Suppose that $\scr{F} \in V(X)$ is
$(e,f)$-F periodic. Let $X,Y$
be as in \ref{dis:vecbun}.
Let $M= \psi(\scr{F})$.
Then $\scr{F}$ is $(e_1,e_2)$-F
periodic iff $M(e_1) \cong M(e_2)$.
\end{lemma}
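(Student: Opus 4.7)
The plan is to establish the key compatibility
\[
  \psi\bigl((\phi_X^{n})^{*}\scr{F}\bigr) \cong M(n) \qquad \text{for every } n \geqslant 0,
\]
which reduces the question about Frobenius periodicity of $\scr{F}$ on $X$ to periodicity of the sequence $\{M(n)\}$ on the module side. Once this identification is in hand, both directions of the iff become transparent.

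To prove the compatibility, I would exploit the naturality of Frobenius: the square with horizontal arrows $\phi_X, \phi_Y$ and vertical arrows $\pi$ commutes, so $(\phi_Y)^{*} \circ \pi^{*} = \pi^{*} \circ (\phi_X)^{*}$. For a vector bundle $\scr{G}$ on $Y$, the pullback $(\phi_Y)^{*}\scr{G}$ corresponds on affine opens to tensoring with the Frobenius-twisted structure sheaf, so its global sections on $Y$ recover $\Gamma(\scr{G}) \otimes_R \upe{R}$ up to the usual correction at the excised closed point. Since $\Gamma$ always produces modules of depth $\geqslant 2$ (by \cite[Theorem~1.3]{Hor}) and $R$ is $(S_2)$, this correction is exactly taking the reflexive hull, giving $\Gamma((\phi_Y)^{*}\scr{G}) \cong (\Gamma(\scr{G}) \otimes_R \upe{R})^{**}$. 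Iterating $e$ times and setting $\scr{G} = \pi^{*}\scr{F}$ yields the compatibility.

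The ``only if'' direction is then immediate: apply $\psi$ to an isomorphism $(\phi_X^{e_1})^{*}\scr{F} \cong (\phi_X^{e_2})^{*}\scr{F}$ and read off $M(e_1) \cong M(e_2)$. For the ``if'' direction, starting from $M(e_1) \cong M(e_2)$ the compatibility gives $\psi((\phi_X^{e_1})^{*}\scr{F}) \cong \psi((\phi_X^{e_2})^{*}\scr{F})$; decomposing into indecomposables and applying Horrocks' Proposition~9.5 (recalled in Discussion~\ref{dis:vecbun}) then yields $(\phi_X^{e_1})^{*}\scr{F}_i \cong (\phi_X^{e_2})^{*}\scr{F}_i \otimes \scr{O}_X(m_i)$ for some integers $m_i$.

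The main obstacle is removing these twists $m_i$. Here the standing hypothesis that $\scr{F}$ is itself $(e,f)$-F periodic becomes essential: because $(\phi_X^{e})^{*}\scr{F} \cong (\phi_X^{f})^{*}\scr{F}$, the orbit of $\scr{F}$ under iterated Frobenius pullback sits inside a finite set of isomorphism classes, so only finitely many twist parameters can arise for all pairs $(e_1,e_2)$ simultaneously. Combining this finiteness with the scaling rule $\phi_X^{*}\scr{O}_X(m) = \scr{O}_X(pm)$ (so that further Frobenius pullbacks multiply the twist by $p$) forces each $m_i$ to be zero; otherwise the sequence of twists would be unbounded, contradicting the finiteness coming from F-periodicity. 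This gives the desired strict isomorphism $(\phi_X^{e_1})^{*}\scr{F} \cong (\phi_X^{e_2})^{*}\scr{F}$ and completes the argument.
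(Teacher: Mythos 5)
For the ``only if'' direction your argument is essentially the paper's: you establish the compatibility $\psi\bigl((\phi_X^n)^*\scr{F}\bigr)\cong M(n)$ and apply $\psi$. The paper encodes this compatibility using the tensor formula $\Gamma(\scr{G}_1\otimes\scr{G}_2)=(\Gamma(\scr{G}_1)\otimes_R\Gamma(\scr{G}_2))^{**}$ from Discussion~\ref{dis:vecbun}; you instead use naturality of Frobenius with respect to $\pi$ together with the depth-$2$ property of $\Gamma$. These are two phrasings of the same computation.

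Where you genuinely add to the paper is the converse, which the paper disposes of by simply asserting that it ``comes from sheafification.'' Sheafifying $M(e_1)\cong M(e_2)$ only recovers $\pi^*(\phi_X^{e_1})^*\scr{F}\cong\pi^*(\phi_X^{e_2})^*\scr{F}$ on $Y$, and because $\pi^*\scr{O}_X(m)\cong\scr{O}_Y$ the map $\pi^*$ is not faithful on isomorphism classes; as Discussion~\ref{dis:vecbun} itself records via \cite[Proposition~9.5]{Hor}, one can at best conclude $(\phi_X^{e_1})^*\scr{F}\cong(\phi_X^{e_2})^*\scr{F}\otimes\scr{O}_X(m)$ for some undetermined $m$. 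You correctly identify this twist ambiguity and use the standing $(e,f)$-periodicity to remove it: Frobenius pullback multiplies twists by $p$, so a nonzero $m$ would force an unbounded family of twists among the finitely many isomorphism classes occurring in the Frobenius orbit of $\scr{F}$, a contradiction. (A shortcut worth noting: periodicity already forces $\deg\det(\phi_X^n)^*\scr{F}=0$ for all $n$, and comparing determinant degrees in the twisted isomorphism then yields $m\cdot\rk(\scr{F})\cdot\deg\scr{O}_X(1)=0$ directly.) The one point to tighten is the reduction to indecomposables: Krull--Schmidt on $X$ pairs indecomposable summands of $(\phi_X^{e_1})^*\scr{F}$ with those of $(\phi_X^{e_2})^*\scr{F}$ only up to a permutation, and these summands need not be the Frobenius pullbacks of the indecomposable summands $\scr{F}_i$ of $\scr{F}$ itself, which may split further under pullback. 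With that bookkeeping made explicit, your argument is correct and supplies detail that the paper's one-sentence treatment of the converse leaves to the reader.
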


\begin{proof}
For ``only if'', we have
$\psi \big( (\phi^e)^*(\scr{F}) \big)
= \psi (\scr{F} \otimes (\phi^e)^*(X))
= \big( \psi (\scr{F}) \otimes
(\phi^e)^*(X) \big)^{**}
= M(e)$ as in \ref{dis:vecbun}.
The ``if'' part comes from
sheafification.
\end{proof}

Next, we discuss a generalization of one of the results in \cite[Theorem 3.1]{PS}. For that we need to recall some notation. Let $R$ be a $F$-finite normal domain and let $D$ be a $\Q$-divisor. 

In the next result we are able to remove the condition that the characteristic $p$ is coprime to $r$ as in  \cite[Theorem 3.1]{PS}. 
We follow the same trick  as in \cite{PS}, with a crucial difference suggested by our approach in this paper: the reflexive module representing a torsion element in the class group has finite $F$-type. 

\begin{thm}\label{PS}
Let $R$ be an $F$-finite normal domain with perfect residue field and $X =\Spec R$. Let $\Delta$ be a $\Q$-divisor on $X$ such that the pair $(X, \Delta)$ is strongly $F$-regular. Let $D$ be an integral divisor such that $rD \sim r\Delta'$ for some integer $r>0$ and $0\leq \Delta'\leq\Delta$. Then $\mathscr{O}_X(-D)$ is Cohen-Macaulay.
\end{thm}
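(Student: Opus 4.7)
The plan is to combine the cyclic-cover technique of Patakfalvi-Schwede with Corollary~\ref{cor:idealmcm}, which says that an $(S_2)$ module of finite $F$-type over a strongly $F$-regular ring is maximal Cohen-Macaulay. Since $0\le\Delta'\le\Delta$ and $(X,\Delta)$ is strongly $F$-regular, $R$ itself is strongly $F$-regular; moreover, $\mathscr{O}_X(-D)$, being a divisorial ideal of the normal domain $R$, is reflexive and hence $(S_2)$. By Corollary~\ref{cor:idealmcm} it therefore suffices to prove $\mathscr{O}_X(-D)\in\FF(R)$.

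Following PS, the linear equivalence $rD\sim r\Delta'$ yields a cyclic $R$-algebra
\[
  S \;=\; \bigoplus_{i=0}^{r-1}\mathscr{O}_X\bigl(-iD+\lfloor i\Delta'\rfloor\bigr),
\]
with multiplication built from the isomorphism $\mathscr{O}_X(-rD)\cong\mathscr{O}_X(-r\Delta')$. The degree-$0$ inclusion $R\hookrightarrow S$ splits, and $\mathscr{O}_X(-D)$ appears as the degree-$1$ summand of $S$ as an $R$-module, so it suffices to show $S\in\FF(R)$. In PS, the hypothesis $\gcd(p,r)=1$ is used precisely at this point to make $R\to S$ étale in codimension~$1$, yielding $S\in\FF(R)$ by Example~\ref{eg:modft}(b). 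Our approach bypasses this étaleness demand.

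To establish $S\in\FF(R)$ without coprimality, I would compute the reflexive Frobenius pullbacks summand by summand: $S(e)=\bigoplus_{i=0}^{r-1}\mathscr{O}_X\bigl(p^e(-iD+\lfloor i\Delta'\rfloor)\bigr)$. A direct calculation using $rD\sim r\Delta'$ yields the identity
\[
  r\bigl[-iD+\lfloor i\Delta'\rfloor\bigr] \;=\; -\bigl[r\{i\Delta'\}\bigr]\quad\text{in }\Cl(R),
\]
so each summand's class has $r$-th multiple lying in the finite set $\{-[r\{i\Delta'\}]:i=0,\dots,r-1\}$. Combined with Example~\ref{eg:modft}(a)---the paper's key observation that a reflexive module representing a torsion class in $\Cl(R)$ automatically lies in $\FF(R)$, with no coprimality restriction on the order---and with the strong $F$-regularity of the pair $(X,\Delta)$, which should force the classes $[r\{i\Delta'\}]$ into the torsion subgroup of $\Cl(R)$, one concludes $S\in\FF(R)$. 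Hence $\mathscr{O}_X(-D)\in\FF(R)$ as a direct summand of $S$, and Corollary~\ref{cor:idealmcm} finishes the argument.

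The main obstacle will be the torsion assertion just alluded to: that the strong $F$-regularity of the pair $(X,\Delta)$---and not merely of $R$---genuinely forces the classes $[r\{i\Delta'\}]$ to be torsion in $\Cl(R)$. This is exactly the step where the finer pair structure replaces the coprimality $\gcd(p,r)=1$ used by PS, and it is the crucial place where Example~\ref{eg:modft}(a) is indispensable, since it is what allows the passage from torsion in the class group to finite $F$-type without any restriction on how the order interacts with $p$.
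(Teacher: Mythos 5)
Your strategy has a fundamental gap that cannot be repaired, and the obstacle you flag at the end is indeed fatal. The reduction to Corollary~\ref{cor:idealmcm} would require $\mathscr{O}_X(-D)\in\FF(R)$, which by Example~\ref{eg:modft}(a) is equivalent to $[D]$ being torsion in $\Cl(R)$. But nothing in the hypotheses makes $[D]$ torsion: the relation $rD\sim r\Delta'$ only says $[D-\Delta']$ is torsion, so $[D]$ is constrained only modulo the class of the $\Q$-divisor $\Delta'$, which is arbitrary. For the same reason, the cyclic algebra $S$ cannot lie in $\FF(R)$ in general (it contains $\mathscr{O}_X(-D+\lfloor\Delta'\rfloor)$, typically $\mathscr{O}_X(-D)$, as a direct summand), and your hoped-for assertion that strong $F$-regularity of the pair forces $[r\{i\Delta'\}]$ to be torsion is false --- the $F$-regularity of a pair imposes no such constraint on the divisor's class in $\Cl(R)$. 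Your own identity $r\bigl[-iD+\lfloor i\Delta'\rfloor\bigr] = -\bigl[r\{i\Delta'\}\bigr]$ makes this visible: it only shows the summand classes lie in finitely many cosets of the torsion subgroup, not that they are torsion, and the gap between those two is exactly $[D]$ modulo torsion.

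The paper's proof avoids this trap by never requiring $\mathscr{O}_X(-D)$ (or any cyclic cover) to be of finite $F$-type, and indeed it does not use Corollary~\ref{cor:idealmcm} or a cyclic cover at all. It starts from the Frobenius splitting of the pair, $F_*^e\mathscr{O}_X\bigl((q-1)\Delta\bigr)=\mathscr{O}_X^{n_e}\oplus N_e$ with $\liminf n_e/q^d>0$, twists by $\mathscr{O}_X(-D)$ and reflexifies to get
\[
  F_*^e\mathscr{O}_X\bigl((q-1)(\Delta-D)-D\bigr)=\mathscr{O}_X(-D)^{n_e}\oplus N'_e.
\]
The finite-$F$-type observation is then applied to the \emph{source} modules on the left: since $r(\Delta-D)\sim 0$, the divisor classes $[(q-1)(\Delta-D)-D]$ take only finitely many values, so these modules fall into finitely many isomorphism classes. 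Letting $M$ be their direct sum, one has $\upe{M}\cong \mathscr{O}_X(-D)^{n_e}\oplus P_e$ with $\liminf n_e/q^d>0$, and Lemma~\ref{lem:Yao} (the $F$-abundance side of the dictionary) forces $\mathscr{O}_X(-D)$ to be Cohen-Macaulay. In short: the torsion object is $\Delta-D$, not $D$, and the payoff of that torsion is harvested through Lemma~\ref{lem:Yao} rather than through Corollary~\ref{cor:idealmcm}. To salvage your write-up you would need to replace the entire cyclic-cover reduction with this twist-and-count argument.
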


\begin{proof}
Since we also have that the pair $(X,\Delta')$ is strongly $F$-regular, one may assume $\Delta'=\Delta$. Now, this assumption implies that there is a decomposition of $R$-modules ($q=p^e$) (\cite[Lemma 3.5]{BST}):
$$F_*^e\mathscr{O}_X((q-1)\Delta) = \mathscr{O}_X^{n_e} \oplus N_e$$ 
such that $\liminf_{e \to \infty} \frac{n_e}{q^d} >0$.  
Twisting by $\mathscr{O}_X(-D)$, reflexifying, we get a decomposition:
$$F_*^e\mathscr{O}_X((q-1)(\Delta-D)-D) = \mathscr{O}_X(-D)^{n_e} \oplus N'_e$$ 
The key point now is that as $r(\Delta-D)\sim 0$, there are only finitely many isomorphism classes of the modules $\mathscr{O}_X((q-1)(\Delta-D)-D)$. Let $M$ be the direct sum of all these modules and $I=\mathscr{O}_X(-D)$, what we have is precisely:
$$^eM \cong  I^{n_e}\oplus P_e $$
with $\liminf_{e \to \infty} \frac{n_e}{q^d} >0$. Lemma \ref{lem:Yao} now forces $I$ to be Cohen-Macaulay.
\end{proof}

\section{Acknowledgments} 
The first author would like to thank Kevin Tucker and Karl Schwede for patiently explaining to him many basic facts on the subjects this note.


\begin{thebibliography}{99}


\bibitem{24hrs} \textit{Twenty-Four
Hours of Local Cohomology},
S.\ Iyengar, G.\ J.\ Leuschke,
A.\ Leykin, C.\ Miller,
E.\ Miller, A.\ K.\ Singh,
U.\ Walther,
American Mathematical Society,
Providence, R.I., c2007.

\bibitem{lc}
\textit{Local Cohomology},
M.\ P.\ Brodmann,
R.\ Y.\ Sharp,
Cambridge Studies in Advanced Mathematics
\textbf{136},
Cambridge University Press, 2013.

\bibitem{AE} I.\ M.\ Aberbach,
F.\ Enescu, \textit{The Structure
of $F$-Pure Rings},
Math.\ Z.\ \textbf{250} (2005),
no.\ 4, 791--806.

\bibitem{BK} H.\ Brenner, A.\ Kaid,
\textit{An Explicit Example of Frobenius Periodicity},
J.\ Pure Appl.\ Algebra \textbf{217} (2013),
no.\ 8, 1412--1420. 

\bibitem{BST} M.\ Blickle,
K.\ Schwede, K.\ Tucker,
\textit{$F$-signature of
Pairs and the Asymptotic
Behavior of Frobenius
Splittings},
Adv.\ in Math.\ \textbf{231}
(2012), 3232--3258.

\bibitem{BW} N.\ R.\ Baeth,
R.\ Wiegand, \textit{Factorization
Theory and Decompositions of
Modules}, American Mathematical
Monthly \textbf{120}, no.\ 1
(January 2013), 3--34.

\bibitem{DS} H.\ Dao,
I.\ Smirnov,
\textit{On Generalized Hilbert-Kunz
Function and Multiplicity},
arXiv:1305.1833.

\bibitem{FS} K.\ R.\ Fuller,
W.\ A.\ Shutters,
\textit{Projective Modules over
Non-commutative Semilocal
Rings}, Tohoku Math.\ J.\ \textbf{27}
(1975), no.\ 3, 303--311.

\bibitem{Hor} G.\ Horrocks,
\textit{Vector Bundles on the
Punctured Spectrum of a Local Ring},
Proc.\ London Math.\ Soc.\ (3)
\textbf{14} (1964), 689--713.

\bibitem{HN} M. Hashimoto, Y. Nakajima,
\textit{Generalized $F$-signature of invariant subrings}, preprint
\url{http://front.math.ucdavis.edu/1311.5963}. 

\bibitem{HMS} J.\ Horiuchi,
L.\ E.\ Miller, K.\ Shimomoto,
Appendix by K.\ Schwede,
A.\ K.\ Singh,
\textit{Deformation of $F$-Injectivity
and Local Cohomology},
arXiv:1210.3589v3.

\bibitem{Katz} N.\ M.\ Katz,
\textit{$p$-adic Properties of Modular
Schemes and Modular Forms},
Modular Functions of One Variable, III
(Proc.\ Internat.\ Summer School, Univ.\ Antwerp,
Antwerp, 1972), 69--190.
Lecture Notes in Mathematics, \textbf{350},
Springer, Berlin, 1973. 

\bibitem{Ko} J.\ Koll\'{a}r,
\textit{A Local Version of the
Kawamata-Viehweg Vanishing
Theorem}, Pure Appl.\ Math.\ 
Q.\ \textbf{7} (2011), no.\ 4,
Special Issue: In memory of Eckart Viehweg,
1477--1494.

\bibitem{Kunz} E.\ Kunz,
\textit{On Noetherian Rings of
Characteristic $p$},
American J.\ of Math.\ \textbf{98}
(Winter, 1976), no. 4, 999--1013.

\bibitem{MS} V.\ B.\ Mehta,
V.\ Srinivas, \textit{Varieties in
Positive Characteristic with Trivial
Tangent Bundle}, Compositio Mathematica
\textbf{64.2} (1987), 191--212.

\bibitem{Mz} M.\ Majidi-Zolbanin,
\textit{Splitting of Vector Bundles on
Punctured Spectrum of Regular Local Rings},
Doctoral dissertation,
the Graduate Center of the City
University of New York (2005).

\bibitem{Nak} Y.\ Nakajima,
\textit{Dual $F$-Signature of
Special Cohen-Macaulay Modules
over Cyclic Quotient Surface
Singularities},
arXiv:1311.5967v1.

\bibitem{PS} Z.\ Patakfalvi,
K.\ Schwede,
\textit{Depth of $F$-singularities
and Base Change of Relative
Canonical Sheaves},
J.\ Inst.\ Math.\ Jussieu
\textbf{13} (2014), no.\ 1,
43--63.

\bibitem{Smi} K.\ Smith,
\textit{Globally F-regular Varieties:
Applications to Vanishing Theorems for
Quotients of Fano Varieties},
Michigan Math.\ J.\ \textbf{48}
(2000), no.\ 1, 553--572.

\bibitem{Tu} K.\ Tucker,
\textit{$F$-Signature Exists},
 Inventiones mathematicae \textbf{190} (2012),  743--765. 

\bibitem{Wa} 
K. Watanabe, \textit{F-regular and F-pure normal
graded rings}, Journal of Pure and Applied Algebra \textbf{71} (1991), 341--350.

\bibitem{Yao}
Y.\ Yao, \textit{Modules with Finite
$F$-Representation Type},
J.\ London Math.\ Soc.\ (2) \textbf{72} (2005),
no. 1, 53--72.














\end{thebibliography}
\end{document}